\newtheorem{theorem}{Theorem}
\newtheorem{example}{Example}
\numberwithin{equation}{section}
\numberwithin{lemma}{section}
\numberwithin{theorem}{section}
\numberwithin{corollary}{section}
\begin{document}
	\title{On the dual valued generalized hypergeometric function and its special cases}
	\author{Ravi Dwivedi$^{1,}$\footnote{E-mail: dwivedir999@gmail.com}  
		\,
		 and Juan Carlos Cort\'es$^{2,}$\footnote{E-mail: jccortes@mat.upv.es (Corresponding author)}
		\\ ${}^{1}$Department of Science, Swami Atmanand Govt. Engish \\
		Medium Model College Jagdalpur, Bastar (CG) 494001, India.\\
	${}^{2}$ Instituto de Matem\'atica
	Multidisciplinar, Universitat\\
	Polit\'ecnica 
	de Val\'encia, 46022
	Val\'encia, Spain}
	\maketitle
	\begin{abstract}
This paper explores the calculus of dual-valued functions and investigates the gamma function, beta function and generalized hypergeometric functions by incorporating dual numbers as parameters and variables. We examine its fundamental properties, including regions of convergence, differential equations, and integral representations. 
Furthermore, we provide an in-depth discussion on the various properties of the dual confluent and Gauss hypergeometric functions.
 
		\medskip
		\noindent\textbf{Keywords}: Dual numbers, Dual functions, Dual Gamma and Beta Functions, Confluent hypergeometric function, Gauss hypergeometric function, Generalized hypergeometric function.
		
		\medskip
		\noindent\textbf{AMS Subject Classification}: 30G35, 33C05, 33C15, 33C20. 
	\end{abstract}
	
	\section{Introduction}\label{s:i}
	The theory of dual numbers-expressions of the form $\hat{x} = x + \epsilon x'$, where $\epsilon^2 = 0$ has gained considerable attention for its applications in \textit{automatic differentiation}, \textit{geometric modeling}, and \textit{symbolic computation}~\cite{gw, maclane1998categories, stitzinger1982dual}. In recent years, the incorporation of dual numbers into the framework of \textit{special functions}, notably the \textit{hypergeometric functions}, has led to the development of \textit{dual hypergeometric functions}, which encapsulate both function values and first-order sensitivity data within a single algebraic structure.
	
	Hypergeometric functions, such as the generalized function ${}_pF_q$, arise naturally as solutions to a wide class of linear differential equations, including those found in quantum mechanics, fluid dynamics, and combinatorics~\cite{AAR,emo}. Extending these functions to dual arguments enables the \textit{simultaneous computation of function values and their derivatives} with respect to either their parameters or their main argument. This dual framework aligns closely with the principles of \textit{forward-mode automatic differentiation}~\cite{giles2008extended}, allowing derivatives to be computed analytically and with greater numerical stability compared to finite difference methods.
	
	Moreover, dual hypergeometric functions offer significant advantages in \textit{perturbation theory} and \textit{asymptotic analysis}, where sensitivity to parameter changes is often of primary interest. By encoding infinitesimal variations directly into dual parameters-such as evaluating ${}_2F_1(a + \epsilon b, c; d; z)$ one obtains both the function and its directional derivative in a compact form. This mechanism is particularly effective in \textit{parametric sensitivity analysis}, where hypergeometric terms are involved in models with tunable parameters, such as in statistical distributions and Bayesian priors~\cite{andrews1974scale, devroye1986non,fox1955moment}.
	
	In symbolic computation systems, dual-number extensions of special functions facilitate \textit{derivative-aware symbolic manipulation}, enabling closed-form simplification and symbolic differentiation pipelines that are otherwise cumbersome when relying solely on external derivative formulas. This is beneficial in modern \textit{computer algebra systems} and \textit{machine learning frameworks} that increasingly rely on symbolic-numeric hybrid processing~\cite{abadi2016tensorflow, davenport1993computer}.
	
	Recent studies have also suggested that dual versions of special functions can be valuable in solving \textit{parameterized differential equations} and constructing \textit{approximate solutions} in nonlinear regimes~\cite{bender1978advanced, hormander1983analysis, olver1993applications}. By applying dual inputs to known solutions, one can extract higher-order behaviors or local linearizations without re-deriving the underlying solution structure from scratch.
	
	In this work, we systematically develop the theory of \textit{dual hypergeometric functions}, deriving general expansion rules and proving algebraic identities where their use may offer both theoretical and practical benefits. 
\section{Basic definitions and preliminaries}
Throughout this paper, the set of real numbers and dual real numbers will be denoted by $\mathbb{R}$ and $\mathbb{D} \mathbb{R}$, respectively. A number of the form $\widehat{x} = x_1 + \varepsilon x_2, x_1, x_2 \in \mathbb{R}$ is called a dual real number or simply a dual number,  where $\varepsilon$ is an infinitesimal symbol  such that $\varepsilon^2 = 0$. The real numbers $x_1$ and $x_2$ are called real and dual part of $\widehat{x}$, respectively. We denote the real part of a dual number $\widehat{x}$ by $\Re(\widehat{x})$. Let $\widehat{x} = x_1 + \varepsilon x_2$, $\widehat{y} = y_1 + \varepsilon y_2 \in \mathbb{D}\mathbb{R}$, for all $x_1, x_2, y_1, y_2 \in \mathbb{R}$. Then, 
they can be added or subtracted in the following way:
\begin{align}
	\widehat{x} \pm \widehat{y} = (x_1 \pm y_1) + \varepsilon \, (x_2 \pm y_2).
\end{align}
They can be multiplied as given below
\begin{align}
	\widehat{x} \cdot  \widehat{y} =   x_1 \, y_1 + \varepsilon (x_1 \, y_2 + y_1 \, x_2).\label{2.2}
\end{align}
The inverse of a dual number $\widehat{x} = x_1 + \varepsilon x_2$ can be obtained by
\begin{align}
	\frac{1}{\widehat{x}}  = \frac{1}{x_1} - \varepsilon \frac{x_2}{x_1^2}, \quad x_1 \ne 0.
\end{align}
Further, the quotient of two dual numbers can be derived as
\begin{align}
	\frac{\widehat{x}}{\widehat{y}} = \frac{x_1 + \varepsilon x_2}{y_1 + \varepsilon y_2} = \frac{x_1 \, y_1 + \varepsilon (x_2 \, y_1 - x_1 \, y_2)}{y_1^2}, \quad y_1 \ne 0. \label{e2.4}
\end{align}
The absolute value of a dual number $\widehat{x}$ is given by $\vert \widehat{x}\vert = \vert x_1 + \varepsilon x_2\vert = \vert \Re (\widehat{x})\vert = \vert x_1\vert$ and, by \eqref{2.2} $\vert\widehat{x} \widehat{y}\vert = \vert\widehat{x}\vert \, \vert \widehat{y}\vert$. 

The exponent of a dual number, say $\widehat{b}$, with base a real number can be obtained as follows
\begin{align}
	a^{\widehat{b}} & = a^{b_1 + \varepsilon \, b_2} = a^{b_1} (1 + \varepsilon \, b_2 \log a), \quad a > 0.\label{2.4}
\end{align}
Indeed, observe that
\begin{align}
a^{b_1 + \varepsilon \, b_2} = a^{b_1} \, a^{\varepsilon \, b_2} = a^{b_1} \, e^{\varepsilon b_2 \, \log (a)}. \label{2.5}
\end{align}
The Taylor expression of the last function is 
\begin{align}
e^{\varepsilon b_2 \, \log (a)} = 1 + {\varepsilon b_2 \, \log (a)}, \label{2.6}
\end{align}
Since $\varepsilon^2 = 0$ and higher-order terms vanish. From \eqref{2.5} and \eqref{2.6}, one obtains \eqref{2.4}. 

The rule \eqref{2.4} can be generalized for two dual numbers 
\begin{align} 
	(\widehat{a})^{\widehat{b}} & = (a_1 + \varepsilon \, a_2)^{b_1 + \varepsilon \, b_2} = (a_1)^{b_1} \left(1 + \varepsilon \left(\frac{a_2 \, b_1}{a_1} +  b_2 \log a_1\right)\right), \quad a_1 > 0.\label{2.7}
\end{align} 
 To justify this expression,observe that
 \begin{align}
 (\widehat{a})^{\widehat{b}} = e^{\widehat{b} \, \log (\widehat{a})}. \label{e2.8}
 \end{align}
Now, utilizing the Taylor expansion
\begin{align}
\log (\widehat{a}) = \log (a_1 + \varepsilon \, a_2) = \log (a_1) + \varepsilon \, \frac{a_2}{a_1}. \label{e2.9}
\end{align}
So, 
\begin{align}
\widehat{b} \, \log (\widehat{a}) = (b_1 + \varepsilon \, b_2) \, \left(\log (a_1) + \varepsilon \, \frac{a_2}{a_1}\right) = b_1 \, \log (a_1) + \varepsilon \left(b_2 \, \log (a_1) + b_1 \, \frac{a_2}{a_1}\right),\label{e2.10}
\end{align}
where, we have utilized that $\varepsilon^2 = 0$. Substituting \eqref{e2.10} into \eqref{e2.8}, one gets
\begin{align}
(\widehat{a})^{\widehat{b}} & = e^{b_1 \, \log (a_1)} \, e^{\varepsilon \left(b_2 \, \log (a_1) + b_1 \, \frac{a_2}{a_1}\right)} \nonumber\\
& = e^{b_1 \, \log (a_1)} \, \left(1 + \varepsilon \left(b_2 \, \log (a_1) + b_1 \, \frac{a_2}{a_1}\right)\right)\nonumber\\
& = (a_1)^{(b_1)} \, \left(1 + \varepsilon \left(b_2 \, \log (a_1) + b_1 \, \frac{a_2}{a_1}\right)\right), \quad a_1 > 0,
\end{align}
utilizing the Taylor expansion of the exponential function. Finally, the following particular case of \eqref{2.7} can be obtained as
\begin{align}
	(\widehat{a})^{{b}}  = (a_1)^{(b)} \, \left(1 + \varepsilon \left(b \, \frac{a_2}{a_1}\right)\right), \quad a_1 \ne 0.
\end{align}
Let $f$ be a real-valued differential function. Then, for a dual number $\widehat{x} = x_1 + \varepsilon x_2$, the dual function $f(\widehat{x})$ is given by 
\begin{align*}
f(\widehat{x}) = f(x_1) + \varepsilon x_2 \, f'(x_1),
\end{align*}
where $f'(x_1)$ is the ordinary derivative of $f(x)$ at $x = x_1$. Notice that this definition is motivated by the first-order truncation of Taylor expansion of $f(\widehat{x})$.
\section{The calculus of dual valued functions}
It can be clearly observed that, the dual valued function $f(\widehat{x})$ depends on the real number $x_1$ and, hence, for a continuous differentiable function $f(x)$, we define the dual derivative of $f(\widehat{x})$  by
\begin{align}
	f'(\widehat{x}) & = \frac{d}{d \widehat{x}} f(\widehat{x})  = \lim_{\Delta \widehat{x} \to 0} \frac{f(\widehat{x} + \Delta \widehat{x}) - f(\widehat{x})}{\Delta \widehat{x}}\nonumber\\
	& = \lim_{\Delta {x_1} \to 0} \frac{f({x_1} + \Delta {x_1}) - f({x_1})}{\Delta {x_1}} + \varepsilon \, x_2 \lim_{\Delta {x_1} \to 0} \frac{f'({x_1} + \Delta {x_1}) - f'({x_1})}{\Delta {x_1}} \nonumber\\
	& = f'(x_1) + \varepsilon x_2 f''(x_1). 
\end{align} 
Which can also be written as
\begin{align}
\frac{d}{d \widehat{x}} f(\widehat{x}) = \frac{\partial}{\partial x_1} \left(f(x_1) + \varepsilon x_2 f'(x_1)\right).
\end{align}
\begin{example}
For a dual number $\widehat{x} = x_1 + \varepsilon x_2$, if a dual valued function $f(\widehat{x})$ is defined by $f(\widehat{x}) = (\widehat{x})^k$, then the derivative of $f(\widehat{x})$ can be deduced by
\begin{align*}
	\frac{d}{d \widehat{x}} \, f(\widehat{x}) & = \frac{d}{d \widehat{x}} \, (\widehat{x})^k\\
	& = \frac{\partial}{\partial x_1} \, (x_1^k + k \, \varepsilon x_2 \, x_1^{k - 1} )\\
	& = k \, x_1^{k - 1} + k \, (k - 1) \, \varepsilon x_2 \, x_1^{k - 2} = k (\widehat{x})^{k - 1}.
\end{align*}
\end{example}
Similarly, the following dual derivative formulas can be derived:
\begin{align*}
	& \frac{d}{d \widehat{x}} \, e^{\widehat{x}} = e^{\widehat{x}}; \\
	&  \frac{d}{d \widehat{x}} \,  \sin (\widehat{x}) = \cos (\widehat{x});\\
	 & \frac{d}{d \widehat{x}} \,  \cos (\widehat{x}) = - \sin (\widehat{x}); \\
	 &  \frac{d}{d \widehat{x}} \,  \tan (\widehat{x}) = \sec^2 (\widehat{x});\\
	 & \frac{d}{d \widehat{x}} \,  \cot (\widehat{x}) = - \csc^2 (\widehat{x})\\
	 & \frac{d}{d \widehat{x}} \,  \sec (\widehat{x}) = \sec (\widehat{x}) \, \tan (\widehat{x})\\
	 & \frac{d}{d \widehat{x}} \,  \csc (\widehat{x}) = - \csc (\widehat{x}) \, \cot (\widehat{x})\\
	 & \frac{d}{d \widehat{x}} \,  \log (\widehat{x}) = \frac{1}{\widehat{x}}, \quad x_1 > 0.
\end{align*}
\begin{theorem}
For dual valued functions $f(\widehat{x})$ and $g(\widehat{x})$, the following algebraic properties for a dual derivative operator can be verified easily
\begin{align}
 \frac{d}{d \widehat{x}} (c_1 \, f(\widehat{x}) \pm c_2 \, g(\widehat{x})) & = c_1 \, \frac{d}{d \widehat{x}} f(\widehat{x}) \pm c_2 \, \frac{d}{d \widehat{x}} g(\widehat{x}), \quad c_1, c_2 \in \mathbb{R};\label{3.3}
 \\[5pt]
 \frac{d}{d \widehat{x}} ( f(\widehat{x}) \cdot  g(\widehat{x})) & =  \left(\frac{d}{d \widehat{x}} f(\widehat{x})\right) g(\widehat{x}) + f(\widehat{x}) \,  \frac{d}{d \widehat{x}} g(\widehat{x});
 \\[5pt]
 \frac{d}{d \widehat{x}} \left(\frac{f(\widehat{x})} {g(\widehat{x})}\right) & =  \frac{\left(\frac{d}{d \widehat{x}} f(\widehat{x})\right) g(\widehat{x}) - f(\widehat{x}) \,  \frac{d}{d \widehat{x}} g(\widehat{x})}{[g(\widehat{x})]^2}, \quad g(x_1) \ne 0;
 \\[5pt]
 \frac{d}{d \widehat{x}}  f(g(\widehat{x})) & =    f'(g(\widehat{x})) \,  \frac{d}{d \widehat{x}} g(\widehat{x}). 
\end{align}
\end{theorem}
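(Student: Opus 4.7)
The plan is to verify each of the four identities by reducing both sides to the canonical form $A(x_1) + \varepsilon\, x_2\, B(x_1)$ and then matching real and dual components. The guiding principle, which I will use throughout, is that every dual-valued function produced from a real smooth function $h$ takes the shape $h(\widehat{x}) = h(x_1) + \varepsilon\, x_2\, h'(x_1)$, and the dual derivative is obtained by applying $\partial/\partial x_1$ to the two components, yielding $h'(x_1) + \varepsilon\, x_2\, h''(x_1)$. Hence each algebraic rule collapses to a pair of real-variable identities for the real and dual coefficients.

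The linearity statement \eqref{3.3} is essentially immediate: scaling a dual function by a real constant scales both of its components, and addition of dual functions is componentwise, so the claim follows at once from the linearity of $\partial/\partial x_1$. For the product rule, I would first expand $f(\widehat{x})\,g(\widehat{x})$ using the dual multiplication rule \eqref{2.2}, obtaining $f(x_1) g(x_1) + \varepsilon\, x_2\bigl(f(x_1) g'(x_1) + f'(x_1) g(x_1)\bigr)$ after the $\varepsilon^2$ term drops; its dual derivative is then $[fg]'(x_1) + \varepsilon\, x_2\,[fg]''(x_1)$. Computing the proposed right-hand side requires multiplying the two dual numbers $f'(x_1) + \varepsilon x_2 f''(x_1)$ and $g(x_1) + \varepsilon x_2 g'(x_1)$ (and the analogous second product), collecting the dual parts to get $f''(x_1) g(x_1) + 2 f'(x_1) g'(x_1) + f(x_1) g''(x_1) = [fg]''(x_1)$, matching the left-hand side. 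The quotient rule proceeds in exactly the same way, inserting the expression \eqref{e2.4} for $f(\widehat{x})/g(\widehat{x})$ and then performing the same real-variable bookkeeping on the real and dual coefficients; the condition $g(x_1) \ne 0$ is precisely what makes \eqref{e2.4} applicable.

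The chain rule requires more care, and I expect it to be the only genuinely nontrivial step. The subtlety is that $g(\widehat{x})$ is itself a dual number, so $f(g(\widehat{x}))$ must be interpreted by applying the dual-function convention to the argument $g(\widehat{x}) = g(x_1) + \varepsilon\, x_2\, g'(x_1)$, whose real part is $g(x_1)$ and whose dual part is $x_2 g'(x_1)$, \emph{not} $x_2$. Keeping this straight, one obtains $f(g(\widehat{x})) = f(g(x_1)) + \varepsilon\, x_2\, g'(x_1)\, f'(g(x_1))$, and its dual derivative equals $f'(g(x_1))\,g'(x_1) + \varepsilon\, x_2\bigl(f''(g(x_1))(g'(x_1))^2 + f'(g(x_1))\, g''(x_1)\bigr)$. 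On the other side, $f'(g(\widehat{x})) = f'(g(x_1)) + \varepsilon\, x_2\, g'(x_1)\, f''(g(x_1))$ by the same convention, and multiplying this by $\tfrac{d}{d\widehat{x}} g(\widehat{x}) = g'(x_1) + \varepsilon\, x_2\, g''(x_1)$ reproduces the same expression after the $\varepsilon^2$ term is dropped. The whole argument therefore hinges on tracking the dual part of the inner argument faithfully; once that is done, no further difficulty arises and the four identities follow by direct verification.
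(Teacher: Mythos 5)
Your proposal is correct and follows essentially the same route as the paper: write each dual-valued function in the canonical form $h(x_1)+\varepsilon\,x_2\,h'(x_1)$, apply $\partial/\partial x_1$ componentwise, and match real and dual parts. The paper only carries this out explicitly for the linearity identity \eqref{3.3} and asserts the rest are similar, so your careful verification of the product, quotient, and chain rules — in particular tracking that the dual part of the inner argument in $f(g(\widehat{x}))$ is $x_2\,g'(x_1)$ rather than $x_2$ — simply fills in the details the paper omits.
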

\begin{proof}
	Since $f(\widehat{x})$ and $g(\widehat{x})$ are dual valued functions, so we write
	\begin{align*}
	f(\widehat{x}) = f(x_1) + \varepsilon \, x_2 \, f'(x_1), \quad g(\widehat{x}) = g(x_1) + \varepsilon \, x_2 \,  g'(x_1),
	\end{align*}
and 
\begin{align*}
	\frac{d}{d \widehat{x}} (c_1 \, f(\widehat{x}) \pm c_2 \, g(\widehat{x})) & = \frac{\partial}{\partial x_1} (c_1 \, f( {x_1}) \pm c_2 \, g( {x_1})) + \varepsilon \, x_2 \, \frac{\partial}{\partial x_1} (c_1 \, f'( {x_1}) \pm c_2 \, g'( {x_1})).
\end{align*}
The partial derivatives are linear and this gives 
\begin{align*}
	\frac{d}{d \widehat{x}} (c_1 \, f(\widehat{x}) \pm c_2 \, g(\widehat{x})) & = c_1 \, f'( {x_1}) \pm c_2 \, g'( {x_1}) + \varepsilon \, x_2 \, ( c_1 \, f''( {x_1}) \pm  c_2 \, \varepsilon \, x_2 \, g''( {x_1}))\nonumber\\
	& = c_1   (f'(x_1) + \varepsilon \, x_2 \, f'' (x_1)) \pm c_2  (g'(x_1) + \varepsilon \, x_2 \, g'' (x_1))\nonumber\\
	& = c_1 \frac{\partial}{\partial x_1} (f(x_1) + \varepsilon \, x_2 \, f' (x_1)) \pm c_2 \frac{\partial}{\partial x_1} (g(x_1) + \varepsilon \, x_2 \, g' (x_1))\nonumber\\
	& = c_1 \, \frac{d}{d\widehat{x}} f(\widehat{x}) \pm c_2 \, \frac{d}{d\widehat{x}} g(\widehat{x}).
\end{align*}
It completes the proof of \eqref{3.3}. Similarly, the other can be verified. 
\end{proof}
Let $F(\widehat{x})$ be a dual valued function such that $\frac{d}{d \widehat{x}} f(\widehat{x}) = F(\widehat{x})$. Then, $F(\widehat{x})$ will be called dual anti-derivative of $f(\widehat{x})$, and we represent it by 
\begin{align}
	\int F(\widehat{x}) \, d \widehat{x}.
\end{align}
Since $\frac{d}{d \widehat{x}} f(\widehat{x}) = \frac{\partial}{\partial x_1} [f(x_1) + \varepsilon x_2 f'(x_1)]$, so we evaluate the integral $\int F(\widehat{x}) \, d \widehat{x}$ by $\int F(\widehat{x}) \, d {x_1}$. For example, if $F(\widehat{x}) = (\widehat{x})^k$, then 
\begin{align*}
	\int F(\widehat{x}) \, d \widehat{x} &= \int (x_1^k + k \,  \varepsilon x_2 \, x_1^{k - 1}) \, dx_1\\
	& = \frac{x_1^{k + 1}}{k + 1} + \varepsilon x_2 \, {x_1^k} + C\\
	& = \frac{x_1^{k + 1} + (k + 1) \, \varepsilon x_2 \, x_1^k}{k + 1} + C  = \frac{(\widehat{x})^{k + 1}}{k + 1} + C, 
\end{align*}
where $C$ is an integration constant. Similarly, we have the following dual integral formulas
\begin{align*}
	& \int  e^{\widehat{x}} \, d \widehat{x} = e^{\widehat{x}} + C; \\
	&  \int  \sin (\widehat{x})\, d \widehat{x} = - \cos (\widehat{x}) + C;\\
	& \int  \cos (\widehat{x}) \, d \widehat{x} =  \sin (\widehat{x}) + C; \\
	&  \int  \tan (\widehat{x}) \, d \widehat{x} = \log \left(\sec (\widehat{x})\right) + C;\\
	& \int \cot (\widehat{x}) \, d \widehat{x} = - \log \left(\csc (\widehat{x})\right) + C;\\
	& \int  \sec (\widehat{x}) \, d \widehat{x} = \log (\sec (\widehat{x}) + \tan (\widehat{x})) + C;\\
	& \int  \csc (\widehat{x}) \, d \widehat{x} = \log  (\csc (\widehat{x}) - \cot (\widehat{x})) + C.
\end{align*}
For dual valued functions $f(\widehat{x})$ and $g(\widehat{x})$, the following algebraic properties for a dual integral operator can be verified easily
\begin{align*}
	\int (c_1 \, f(\widehat{x}) \pm c_2 \, g(\widehat{x})) \, d \widehat{x} & = c_1   \int f(\widehat{x}) \, d \widehat{x} \, \pm  \, c_2   \int g(\widehat{x}) \, d \widehat{x}, \quad c_1, c_2 \in \mathbb{R};
	\\[5pt]
	\int ( f(\widehat{x}) \cdot  g(\widehat{x})) d\widehat{x} & =    f(\widehat{x}) \int g(\widehat{x}) \, d \widehat{x} - \int \left(\frac{d}{d\widehat{x}} f(\widehat{x}) \,  \int g(\widehat{x}) \, d \widehat{x}\right) \, d \widehat{x}.
\end{align*}
Note that in the dual integrals of product, the rule of ILATE as for real integrals will be followed. 
\section{Dual valued gamma function}
Let $a \in \mathbb{R}$ such that $a \ne 0, -1, -2, \dots$. Then, using the integral definition of gamma function given by \cite{d24,emo}
\begin{align}
	\Gamma (a) = \int_{0}^{\infty} e^{-t} \, t^{a - 1} \, dt, \label{3.1}
\end{align}
we define the dual valued gamma function,  for $\widehat{a} = a + \varepsilon b \in \mathbb{D}\mathbb{R}$, as
\begin{align}
	\Gamma_d (\widehat{a}) = \int_{0}^{\infty} e^{-t} \, t^{\widehat{a} - 1} \, dt, \quad a \ne 0, -1, -2, \dots. \label{3.2}
\end{align}
Applying $t^{\varepsilon \, b} = \exp (\varepsilon \, b \, \log t) = 1 + \varepsilon \, b \, \log t$, we rewrite the integral \eqref{3.2} as 
\begin{align}
	\Gamma_d (\widehat{a}) & = \int_{0}^{\infty} e^{-t} \, t^{{a} - 1} \, (1 + \varepsilon \, b \, \log t) \, dt\nonumber\\
	& = \Gamma (a) +   \varepsilon \, b  \int_{0}^{\infty} e^{-t} \, t^{{a} - 1} \,  \log t \, dt, \quad a \ne 0, -1, -2, \dots.
\end{align}
The derivative of gamma function,
\begin{align}
	\Gamma' (a) = \int_{0}^{\infty} e^{-t} \, t^{a - 1} \, \log t \, dt,
\end{align} 
allows to write 
\begin{align}
	\Gamma_d (\widehat{a}) & =   \Gamma (a) +   \varepsilon \, b  \, \Gamma' (a), \quad a \ne 0, -1, -2, \dots.\label{e3.6}
\end{align}
Hence, it can be seen that the dual of gamma function is again a dual number. Since gamma function and its derivative are well-defined and, so dual gamma function can be evaluated. For example, when $a$ is a positive integer, say $k$, then using the relations $\Gamma (k + 1) = k !$ and $\Gamma' (k + 1) = k! \left(-\gamma + \sum_{m = 1}^{k} \frac{1}{m}\right)$, where $\gamma$ is Euler-Mascheroni constant \cite{emo}, we obtain
\begin{align*}
	\Gamma_d (1 + \varepsilon \, b) & = \Gamma (1) + \varepsilon \, b \, \Gamma'(1) = 1 - \varepsilon \, b \, \gamma, \quad \because \Gamma'(1) = -\gamma;
	\\[5pt]
	\Gamma_d (2 + \varepsilon \, b) & = \Gamma (2) + \varepsilon \, b \, \Gamma'(2) = 1 + \varepsilon \, b \, (-\gamma + 1);
	\\[5pt]
	\Gamma_d (3 + \varepsilon \, b) & = \Gamma (3) + \varepsilon \, b \, \Gamma'(3) = 2 !\left[1 + \varepsilon \, b \, \left(-\gamma + 1 + \frac{1}{2}\right)\right].
\end{align*}
In general,
\begin{align*}
	\Gamma_d ((k + 1) + \varepsilon \, b) & = \Gamma (k + 1) + \varepsilon \, b \, \Gamma'(k + 1) \nonumber\\
	& = k ! + \varepsilon \, b \, k ! \, \left(-\gamma  + \sum_{m = 1}^{k}\frac{1}{m}\right) \nonumber\\
	& = k ! \left(1 + \varepsilon \, b \left(-\gamma + \sum_{m = 1}^{k}\frac{1}{m}\right)\right).
\end{align*}
Similarly, for other values of $a$ except $0, -1, -2, \dots$, the value of dual gamma function can be determined. 

We now elaborate few elementary results in terms of dual gamma function. The equation \eqref{e3.6} enables to write 
\begin{align}
	\Gamma_d (\widehat{a} + 1) = \Gamma_d ((a + 1) + \varepsilon \, b) =   \Gamma (a + 1) +   \varepsilon \, b  \, \Gamma' (a + 1).\label{e4.6}
\end{align}
Using the functional equation obeyed by gamma function as $\Gamma (a + 1) = a \, \Gamma (a)$, we write
\begin{align}
	\Gamma_d (\widehat{a} + 1)  & =   a \, \Gamma (a) +   \varepsilon \, b  \, (a \, \Gamma' (a) + \Gamma (a))\nonumber\\
	& = (a + \varepsilon \, b) \, (\Gamma (a) + \varepsilon \, b \, \Gamma'(a)) = \widehat{a} \, 	\Gamma_d (\widehat{a}),\label{2.8}
\end{align}
where we have utilized that $\varepsilon^2 = 0$. Thus the dual gamma function obeyed the functional identity. Employing subsequently identity  \eqref{e4.6} yields
\begin{align}
	\Gamma_d (\widehat{a} + k)   = (\widehat{a} + k - 1) \, (\widehat{a} + k - 2) \cdots (\widehat{a} + 1)  \, \widehat{a} \, 	\Gamma_d (\widehat{a}), \quad k = 0, 1, \dots.\label{a2.8}
\end{align}
The inverse of a dual gamma function can be obtained by using the relation \eqref{a2.8} as
\begin{align}
	\Gamma_d (\widehat{a} + k)  \, [\Gamma_d (\widehat{a})]^{-1} = (\widehat{a} + k - 1) \, (\widehat{a} + k - 2) \cdots (\widehat{a} + 1)  \, \widehat{a}.\label{2.9}
\end{align}
We now use the left-hand side of the above relation \eqref{2.9} to define the dual shifted factorial function. For a dual number $\widehat{a} = a + \varepsilon \, b$, the dual shifted factorial function denoted by $(\widehat{a})_k$ is defined as follows
\begin{align}
	(\widehat{a})_k = \frac{\Gamma_d (\widehat{a} + k)}{\Gamma_d (\widehat{a})} = \begin{cases}
		1, \quad & \text{ if } k = 0,\\
		(\widehat{a} + k - 1) \, (\widehat{a} + k - 2) \cdots (\widehat{a} + 1)  \, \widehat{a}, \quad & \text{ if } k \ge 1. \label{2.10}
	\end{cases}
\end{align}
An immediate consequence of \eqref{2.10} is the derivation of the following related identities. The proofs are elementary and hence we omitted. 
\begin{enumerate}
	\item  $(m + \varepsilon \, b)_k = \frac{(m + k - 1)!}{(m - 1) !} + \varepsilon \, b \frac{d}{dm} \, \frac{(m + k - 1)!}{(m - 1) !}$; 
	\item  $(\widehat{a})_{m + k} = (\widehat{a})_m \, (\widehat{a} + m)_k = (\widehat{a})_k \, (\widehat{a} + k)_m$;
	\item  $(\widehat{a})_{-k} = \frac{(-1)^k}{(1 - \widehat{a})_k}$;
	\item  $(\widehat{a})_{n - k} = \frac{(-1)^k \, (\widehat{a})_n}{(1 - \widehat{a} - n)_k}, \quad 0 \le k \le n$;
	\item   $(-\widehat{a})_k = (-1)^k \, (1 + \widehat{a} - k)_k$;
	\item   $\frac{\left(\frac{\widehat{a}}{2} + 1\right)_k }{\left(\frac{\widehat{a}}{2}\right)_k} = \frac{\widehat{a} + 2k}{\widehat{a}}$.
\end{enumerate} 
Next, we obtain the limiting definition of dual gamma function. To do so, consider $\widehat{a} = a + \varepsilon \, b$ and the following integrals:
\begin{align}
	f(\widehat{a}) & = f (a + \varepsilon \, b) \nonumber\\
	& = \int_{0}^{1} (1 - u)^k \, u^{\widehat{a} - 1} \, du = k ! \left[\widehat{a} (\widehat{a} + 1) \cdots (\widehat{a} + k)\right]^{-1}, \quad a \ne 0, -1, -2, \dots\label{2.12}
\end{align}
and 
\begin{align}
	g(\widehat{a}) & = g (a + \varepsilon \, b)\nonumber\\
	& = \int_{0}^{k} \left(1 - \frac{u}{k}\right)^k \, u^{\widehat{a} - 1} \, du = k ! \, k^{\widehat{a}} \, \left[\widehat{a} (\widehat{a} + 1) \cdots (\widehat{a} + k)\right]^{-1}, \quad a \ne 0, -1, -2, \dots.\label{2.13}
\end{align}
Now, using \eqref{3.2} and \eqref{2.13}, we have
\begin{align}
	&	\Gamma_d (\widehat{a}) - k ! \, k^{\widehat{a}} \, \left[\widehat{a} (\widehat{a} + 1) \cdots (\widehat{a} + k)\right]^{-1}\nonumber\\
	& = \int_{0}^{\infty} e^{-u} \, u^{\widehat{a} - 1} \, du - \int_{0}^{k} \left(1 - \frac{u}{k}\right)^k \, u^{\widehat{a} - 1} \, du\nonumber\\
	& = \int_{0}^{k} \left[e^{-u} - \left(1 - \frac{u}{k}\right)^k \right] \, u^{\widehat{a} - 1} \, du + \int_{k}^{\infty} e^{-u} \, u^{\widehat{a} - 1} \, du.
\end{align}
Denoting $\widehat{a} = a_1 + \varepsilon a_2$, the integral 
\begin{align}
	\int_{0}^{\infty} e^{-u} \, u^{\widehat{a} - 1} \, du = \int_{0}^{\infty} e^{-u} \, u^{a_1} \left(1 + \varepsilon \, a_2 \log (u)\right) \, du < \infty,
\end{align}
then the second integral on the right-hand side is convergent and hence $\int_{k}^{\infty} e^{-u} \, u^{\widehat{a} - 1} \, du \to 0$ as $k \to \infty$. Now, we show 
\begin{align*}
	\int_{0}^{k} \left[e^{-u} - \left(1 - \frac{u}{k}\right)^k \right] \, u^{\widehat{a} - 1} \, du \to 0 \text{ as } k \to \infty.
\end{align*}
Using the inequality, \cite{edr}
\begin{align}
	0 \le e^{-u} - \left(1 - \frac{u}{k}\right)^k \le \frac{u^2 \, e^{-u}}{k}, \quad 0 \le u \le k,
\end{align}
we have
\begin{align*}
	\int_{0}^{k} \left[e^{-u} - \left(1 - \frac{u}{k}\right)^k \right] \, u^{\widehat{a} - 1} \, du \le \frac{1}{k} \, \int_{0}^{k} e^{-u} \, u^{\widehat{a} + 1}  \, du.
\end{align*}
The integral $\int_{0}^{k} e^{-u} \, u^{\widehat{a} + 1}  \, du < + \infty$ as $k \to \infty$. Therefore $\int_{0}^{k} \left[e^{-u} - \left(1 - \frac{u}{k}\right)^k \right] \, u^{\widehat{a} - 1} \, du \to 0$ as $k \to \infty$. Thus, we conclude our finding in the following theorem. 
\begin{theorem}
	Let $\widehat{a} = a + \varepsilon \, b, a, b \in \mathbb{R}$ be a dual number. Then, the limiting definition of dual gamma function is given by
	\begin{align}
		\Gamma_d (\widehat{a}) & = \lim_{k \to \infty} k ! \, k^{\widehat{a}} \, \left[\widehat{a} (\widehat{a} + 1) \cdots (\widehat{a} + k)\right]^{-1}\nonumber\\
		& = \lim_{k \to \infty} k^{\widehat{a}} \, (k - 1) ! \, \frac{k}{\widehat{a} + k} \, [\widehat{a} (\widehat{a} + 1) \cdots (\widehat{a} + k - 1)]^{-1}  = \lim_{k \to \infty} \, \frac{(k - 1) ! \, k^{\widehat{a}}}{(\widehat{a})_k}. 
	\end{align}
\end{theorem}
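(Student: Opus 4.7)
The plan is to follow the integral-comparison strategy already set up in the paragraphs preceding the theorem statement, organizing it cleanly into a single argument. Starting from the integral definition \eqref{3.2} of $\Gamma_d(\widehat{a})$ and the closed form for $g(\widehat{a})$ obtained in \eqref{2.13}, I would write
\begin{align*}
\Gamma_d(\widehat{a}) - \frac{k!\,k^{\widehat{a}}}{\widehat{a}(\widehat{a}+1)\cdots(\widehat{a}+k)} = \int_0^k \Big[e^{-u} - \Big(1-\tfrac{u}{k}\Big)^k\Big] u^{\widehat{a}-1}\,du + \int_k^\infty e^{-u}\,u^{\widehat{a}-1}\,du,
\end{align*}
and then show that both remainder integrals tend to $0$ as $k \to \infty$. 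This reduces the theorem to two convergence checks.

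For the tail integral $\int_k^\infty e^{-u} u^{\widehat{a}-1}\,du$, I would expand $u^{\widehat{a}-1} = u^{a-1}(1 + \varepsilon\, b\,\log u)$ so that the integrand splits into a real part and an $\varepsilon$-part. Both are dominated by a convergent gamma-type integral on $(0,\infty)$ (the logarithmic factor being absorbed into $u^{a-1+\delta}$ for small $\delta$), so the tail is the remainder of an absolutely convergent integral and vanishes as $k\to\infty$. For the bulk integral, I would invoke the stated inequality $0 \le e^{-u} - (1-u/k)^k \le u^2 e^{-u}/k$ on $[0,k]$ to dominate the integrand by $u^{\widehat{a}+1} e^{-u}/k$, again split into real and $\varepsilon$-parts. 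Since $\int_0^\infty u^{a+1} e^{-u}\,du = \Gamma(a+2)$ and $\int_0^\infty u^{a+1}\log(u)\, e^{-u}\,du = \Gamma'(a+2)$ are both finite, the bulk integral is $O(1/k) \to 0$.

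Combining the two estimates gives $\Gamma_d(\widehat{a}) = \lim_{k\to\infty} k!\,k^{\widehat{a}}\,[\widehat{a}(\widehat{a}+1)\cdots(\widehat{a}+k)]^{-1}$, which is the first form in the theorem. The second form is a purely algebraic rearrangement: factoring out $(\widehat{a}+k)$ from the denominator turns $k!$ into $(k-1)!\cdot k$ and produces the factor $k/(\widehat{a}+k)$, which has real part tending to $1$, matching the third displayed expression.

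The main obstacle is a conceptual rather than technical one: one must verify that the estimates carry over cleanly to the dual setting, where the absolute value of a dual number depends only on its real part and where the $\varepsilon$-component introduces a logarithmic factor $\log u$ into the integrand. The delicate point is ensuring that the log-weighted integrals remain dominated uniformly in $k$, which is what justifies passing to the limit componentwise. Once this is in place, the rest of the proof is bookkeeping, and the second expression in the theorem follows by an elementary manipulation with no additional analysis required.
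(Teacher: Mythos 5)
Your proposal is correct and takes essentially the same route as the paper: the paper's proof (given in the paragraphs preceding the theorem) uses exactly the same decomposition of $\Gamma_d(\widehat{a}) - k!\,k^{\widehat{a}}[\widehat{a}(\widehat{a}+1)\cdots(\widehat{a}+k)]^{-1}$ into a bulk integral over $[0,k]$ and a tail over $[k,\infty)$, the same inequality $0 \le e^{-u} - (1-u/k)^k \le u^2 e^{-u}/k$ for the bulk, and the same convergence argument for the tail. Your explicit handling of the $\varepsilon$-component via the $\log u$ factor is slightly more detailed than the paper's, but the approach is the same.
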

\section{Dual valued beta function}
Let $a, c \in \mathbb{R}$ such that $a, c \ne 0, -1, -2, \dots$. Then, the integral definition of beta function is given by \cite{emo, edr}
\begin{align}
	\beta (a, c) = \int_{0}^{1} t^{a - 1} \, (1 - t)^{c - 1} \, dt.
\end{align}
Following this, for dual numbers $\widehat{a} = a + \varepsilon \, b, \widehat{c} = c + \varepsilon \, d, a, b, c, d \in \mathbb{R}$, we define the dual beta function as
\begin{align}
	\beta_d (\widehat{a}, \widehat{c}) = \int_{0}^{1} t^{\widehat{a} - 1} \, (1 - t)^{\widehat{c} - 1} \, dt, \quad a, c \ne 0, -1, -2, \dots.
\end{align}
Now, using $t^{\varepsilon \, b}  =  1 + \varepsilon \, b \, \log t$ and $(1 - t)^{\varepsilon \, d} = 1 + \varepsilon \, d \, \log (1 - t)$, we have
\begin{align}
	\beta_d (\widehat{a}, \widehat{c}) & = \int_{0}^{1} t^{{a} - 1} \, (1 - t)^{{c} - 1} \, (1 + \varepsilon \, b \, \log t + \varepsilon \, d \, \log (1 - t)) \, dt\nonumber\\
	& = \int_{0}^{1} t^{{a} - 1} \, (1 - t)^{{c} - 1} \, dt + \varepsilon \, b \int_{0}^{1} t^{{a} - 1} \, (1 - t)^{{c}  - 1} \, \log t \, dt\nonumber\\
	& \quad  + \varepsilon \, d \int_{0}^{1} t^{{a} - 1} \, (1 - t)^{{c} - 1} \, \log (1 - t) \, dt\nonumber\\
	& = \beta (a, c) + \varepsilon \, b \, \frac{d}{d a} \, \beta(a, c)  + \varepsilon \, d \, \frac{d}{d c} \, \beta(a, c).
\end{align}
Which is an expression of dual beta function in terms of beta function and its derivatives. Now, employing the fundamental relation between gamma and beta function $\beta(a, c) = \frac{\Gamma (a) \, \Gamma (c)}{\Gamma (a + c)}$, we get
\begin{align}
	& \beta_d (\widehat{a}, \widehat{c})\nonumber\\
	& = \frac{\Gamma (a) \, \Gamma (c)}{\Gamma (a + c)} + \varepsilon \, b \frac{\Gamma (a + c) \, \Gamma'(a) \, \Gamma (c) - \Gamma (a) \, \Gamma (c) \, \Gamma' (a + c)}{[\Gamma (a + c)]^2}\nonumber\\
	& \quad   + \varepsilon \, d \frac{\Gamma (a + c) \, \Gamma(a) \, \Gamma' (c) - \Gamma (a) \, \Gamma (c) \, \Gamma' (a + c)}{[\Gamma (a + c)]^2}\nonumber\\
	& = \frac{\Gamma (a) \, \Gamma (c) \, \Gamma (a + c) + \varepsilon \, [\Gamma (a + c) (b \, \Gamma'(a) \, \Gamma (c) + d \, \Gamma(a) \, \Gamma' (c)) - (b + d) \Gamma (a) \, \Gamma (c) \, \Gamma' (a + c)]}{[\Gamma (a + c)]^2}\nonumber\\
	& = \frac{\Gamma (a) \, \Gamma (c) + \varepsilon \, (b \, \Gamma'(a) \, \Gamma (c) + d \, \Gamma(a) \, \Gamma' (c))}{\Gamma (a + c) + \varepsilon \, (b + d) \, \Gamma'(a + c)}\nonumber\\
	& = \frac{(\Gamma (a) + \varepsilon \, b \, \Gamma'(a)) \, (\Gamma (c) + \varepsilon \, d \, \Gamma'(c))}{\Gamma (a + c) + \varepsilon \, (b + d) \, \Gamma'(a + c)} = \frac{\Gamma_d (\widehat{a}) \, \Gamma_d (\widehat{c})}{\Gamma_d(\widehat{a} + \widehat{c})},\label{e3.4}
\end{align}
where in the last two steps, we have first applied the identity \eqref{e2.4} and in the second the identity given in \eqref{e3.6}.  Thus, we obtain the fundamental relation between dual gamma and beta functions. With the help of this, we can derive certain functional relations obeyed by the dual beta function as given in the following theorem.
\begin{theorem}
	For dual numbers $\widehat{a} = a + \varepsilon \, b$, $\widehat{c} = c + \varepsilon \, d$, $\widehat{e} = e + \varepsilon \, f$ and $\widehat{g} = g + \varepsilon \, h$, the dual beta function satisfies the following functional relations:
	\begin{enumerate}
		\item $\beta_d (\widehat{a}, \widehat{c} + 1) = \frac{\widehat{c}}{\widehat{a}} \, \beta_d (\widehat{a} + 1, \widehat{c}) = \frac{\widehat{c}}{\widehat{a} + \widehat{c}} \, \beta_d (\widehat{a}, \widehat{c})$.
		
		\item $\beta_d (\widehat{a}, \widehat{c}) \, \beta_d (\widehat{a} + \widehat{c}, \widehat{e}) = \beta_d (\widehat{c}, \widehat{e}) \,  \beta_d (\widehat{c} + \widehat{e}, \widehat{a}) = \beta_d (\widehat{e}, \widehat{a}) \, \beta_d (\widehat{a} + \widehat{e}, \widehat{c})$.
		
		\item $\beta_d (\widehat{a}, \widehat{c}) \, \beta_d (\widehat{a} + \widehat{c}, \widehat{e}) \, \beta_d (\widehat{a} + \widehat{c} + \widehat{e}, \widehat{g}) = \frac{\Gamma_d (\widehat{a}) \, \Gamma_d (\widehat{c}) \, \Gamma_d (\widehat{e}) \, \Gamma_d (\widehat{g})}{\Gamma_d (\widehat{a} + \widehat{c} + \widehat{e} + \widehat{g})}$.
	\end{enumerate}
\end{theorem}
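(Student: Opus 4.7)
The plan is to reduce all three identities to the fundamental relation \eqref{e3.4}, namely $\beta_d(\widehat{a},\widehat{c}) = \Gamma_d(\widehat{a})\Gamma_d(\widehat{c})/\Gamma_d(\widehat{a}+\widehat{c})$, combined with the dual functional equation $\Gamma_d(\widehat{a}+1) = \widehat{a}\,\Gamma_d(\widehat{a})$ established in \eqref{2.8}. Once these two tools are in hand, each part becomes a manipulation of quotients of dual gamma values, and the dual-number arithmetic goes through exactly as in the classical case because every quotient appearing below has a nonzero real part (the real parts of $\widehat{a},\widehat{c},\widehat{e},\widehat{g}$ are assumed to avoid $0,-1,-2,\dots$).

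For part (1), I would apply \eqref{e3.4} to $\beta_d(\widehat{a},\widehat{c}+1)$ and use $\Gamma_d(\widehat{c}+1) = \widehat{c}\,\Gamma_d(\widehat{c})$ and $\Gamma_d(\widehat{a}+\widehat{c}+1) = (\widehat{a}+\widehat{c})\,\Gamma_d(\widehat{a}+\widehat{c})$ to obtain
\begin{align*}
\beta_d(\widehat{a},\widehat{c}+1) = \frac{\Gamma_d(\widehat{a})\,\widehat{c}\,\Gamma_d(\widehat{c})}{(\widehat{a}+\widehat{c})\,\Gamma_d(\widehat{a}+\widehat{c})} = \frac{\widehat{c}}{\widehat{a}+\widehat{c}}\,\beta_d(\widehat{a},\widehat{c}).
\end{align*}
The same computation applied to $\beta_d(\widehat{a}+1,\widehat{c})$ gives $\beta_d(\widehat{a}+1,\widehat{c}) = \frac{\widehat{a}}{\widehat{a}+\widehat{c}}\beta_d(\widehat{a},\widehat{c})$, and dividing the two expressions yields the remaining equality $\beta_d(\widehat{a},\widehat{c}+1) = (\widehat{c}/\widehat{a})\,\beta_d(\widehat{a}+1,\widehat{c})$.

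For part (2), expanding each product through \eqref{e3.4} causes the intermediate factor $\Gamma_d(\widehat{a}+\widehat{c})$ in the first product to cancel:
\begin{align*}
\beta_d(\widehat{a},\widehat{c})\,\beta_d(\widehat{a}+\widehat{c},\widehat{e}) = \frac{\Gamma_d(\widehat{a})\Gamma_d(\widehat{c})}{\Gamma_d(\widehat{a}+\widehat{c})}\cdot\frac{\Gamma_d(\widehat{a}+\widehat{c})\,\Gamma_d(\widehat{e})}{\Gamma_d(\widehat{a}+\widehat{c}+\widehat{e})} = \frac{\Gamma_d(\widehat{a})\,\Gamma_d(\widehat{c})\,\Gamma_d(\widehat{e})}{\Gamma_d(\widehat{a}+\widehat{c}+\widehat{e})}.
\end{align*}
Applying the identical telescoping to the two remaining products gives the same right-hand side, since the expression is manifestly symmetric in $\widehat{a},\widehat{c},\widehat{e}$. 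Part (3) is treated by the same telescoping applied three times: writing each of the three beta factors via \eqref{e3.4}, the denominators $\Gamma_d(\widehat{a}+\widehat{c})$ and $\Gamma_d(\widehat{a}+\widehat{c}+\widehat{e})$ cancel against matching numerators, leaving $\Gamma_d(\widehat{a})\Gamma_d(\widehat{c})\Gamma_d(\widehat{e})\Gamma_d(\widehat{g})/\Gamma_d(\widehat{a}+\widehat{c}+\widehat{e}+\widehat{g})$, as required.

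There is essentially no hard step: the whole argument is an algebraic computation inside $\mathbb{D}\mathbb{R}$ using identities already established in the excerpt. The only point that requires minor care is invoking the dual quotient rule \eqref{e2.4} to justify that products and quotients of dual gamma values behave as expected, but this is ensured by the assumption that the real parts of the relevant arguments avoid $0,-1,-2,\dots$, so no denominator has vanishing real part. Consequently I would present the three parts in order, stating the telescoping explicitly the first time and simply indicating the analogous cancellations for parts (2) and (3).
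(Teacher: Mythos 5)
Your proposal is correct and follows essentially the same route as the paper: both reduce everything to the fundamental relation $\beta_d(\widehat{a},\widehat{c}) = \Gamma_d(\widehat{a})\,\Gamma_d(\widehat{c})/\Gamma_d(\widehat{a}+\widehat{c})$ together with the functional equation $\Gamma_d(\widehat{a}+1)=\widehat{a}\,\Gamma_d(\widehat{a})$, deriving part (1) by computing $\beta_d(\widehat{a},\widehat{c}+1)$ and $\beta_d(\widehat{a}+1,\widehat{c})$ separately and comparing. The paper leaves parts (2) and (3) to the reader with ``similarly,'' and your explicit telescoping cancellation is exactly the intended argument.
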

\begin{proof}
	From the relations \eqref{2.8} and  \eqref{e3.4}
	\begin{align}
		\beta_d (\widehat{a}, \widehat{c} + 1) = \frac{\Gamma_d (\widehat{a})  \, \Gamma_d (\widehat{c} + 1)}{\Gamma_d (\widehat{a}  + \widehat{c} + 1)} = \frac{\Gamma_d (\widehat{a})  \, \widehat{c} \, \Gamma_d (\widehat{c})}{(\widehat{a}  + \widehat{c}) \, \Gamma_d (\widehat{a}  + \widehat{c})}.
	\end{align}
	Again
	\begin{align}
		\beta_d (\widehat{a} + 1, \widehat{c}) = \frac{\Gamma_d (\widehat{a} + 1)  \, \Gamma_d (\widehat{c})}{\Gamma_d (\widehat{a}  + \widehat{c} + 1)} = \frac{\widehat{a} \, \Gamma_d (\widehat{a})  \, \Gamma_d (\widehat{c})}{(\widehat{a}  + \widehat{c}) \, \Gamma_d (\widehat{a}  + \widehat{c})}.
	\end{align}
	By combining these two identities and \eqref{e3.4}, one obtains the following ones,
	\begin{align}
		\beta_d (\widehat{a}, \widehat{c} + 1) = \frac{\widehat{c}}{\widehat{a}} \, \beta_d (\widehat{a} + 1, \widehat{c}) = \frac{\widehat{c}}{\widehat{a} + \widehat{c}} \, \beta_d (\widehat{a}, \widehat{c}).
	\end{align}
	It completes the proof of first identity. Similarly, the other two identities can be proved.
\end{proof}
\section{Dual valued generalized hypergeometric function}
Hypergeometric functions are a class of special functions, named 'hypergeometric' because they can be expressed in terms of hypergeometric series. A dual-valued hypergeometric function is obtained by extending either the parameters or the variables to dual numbers. Consequently, this leads to three possible definitions of dual-valued generalized hypergeometric functions. Let $\widehat{a}_1 = a_{11} + \varepsilon a_{12}, \dots, \widehat{a}_p = a_{p1} + \varepsilon a_{p2}$, $\widehat{b}_1 = b_{11} + \varepsilon b_{12}, \dots, \widehat{b}_q = b_{q1} + \varepsilon b_{q2}$ and $\widehat{x} = x_1 + \varepsilon x_2 \in \mathbb{D}\mathbb{R}$. Then, we have the following dual forms of generalized hypergeometric function: 
\begin{align}
	{}_pF_q (a_1, \dots, a_p; b_1, \dots, b_q; \widehat{x}) & = \sum_{k = 0}^{\infty} \frac{(a_1)_k \cdots (a_p)_k}{(b_1)_k \cdots (b_q)_k} \, \frac{(\widehat{x})^k}{k!};\label{3.4}
	\\[5pt]
	{}_pF_q (\widehat{a}_1, \dots, \widehat{a}_p; \widehat{b}_1, \dots, \widehat{b}_q;  {x}) & = \sum_{k = 0}^{\infty} \frac{(\widehat{a}_1)_k \cdots (\widehat{a}_p)_k}{(\widehat{b}_1)_k \cdots (\widehat{b}_q)_k} \, \frac{ {x}^k}{k!};\label{3.5}
	\\[5pt]
		{}_pF_q (\widehat{a}_1, \dots, \widehat{a}_p; \widehat{b}_1, \dots, \widehat{b}_q;  \widehat{x}) & = \sum_{k = 0}^{\infty} \frac{(\widehat{a}_1)_k \cdots (\widehat{a}_p)_k}{(\widehat{b}_1)_k \cdots (\widehat{b}_q)_k} \, \frac{ (\widehat{x})^k}{k!}.\label{3.6}
\end{align} 
If dual parts of each parameter and variable is set to be zero, then all these three forms coincide to the generalized hypergeometric function ${}_pF_q (a_1, \dots, a_p; b_1, \dots, b_q;  {x})$ defined by
\begin{align}
	{}_pF_q (a_1, \dots, a_p; b_1, \dots, b_q;  {x}) & = \sum_{k = 0}^{\infty} \frac{(a_1)_k \cdots (a_p)_k}{(b_1)_k \cdots (b_q)_k} \, \frac{ {x}^k}{k!},
\end{align}
where $(a)_k$ is the shifted factorial given by
\begin{align}
	(a)_k =   \begin{cases}
		1, \quad & \text{ if } k = 0,\\
		(a + k - 1) \, (a + k - 2) \cdots (a + 1)  \, a, \quad & \text{ if } k \ge 1.
	\end{cases}
\end{align}
 Further, the dual forms \eqref{3.4} and \eqref{3.5} can be deduced from \eqref{3.6} and hence we elaborate the dual function \eqref{3.6} in detail. 
\subsection{Regions of convergence}
Let us denote the general term of the series \eqref{3.6} by $A_k$ so, using the ratio test, we get
\begin{align*}
	\left\vert \frac{A_{k + 1}}{A_k}\right \vert & = \left \vert \frac{(\widehat{a}_1 + k) \cdots (\widehat{a}_p + k)}{(\widehat{b}_1 + k) \cdots (\widehat{b}_q + k)}  \, \frac{\widehat{x}}{(k + 1)}\right \vert\nonumber\\
	& =   \frac{( \vert\widehat{a}_1\vert + k) \cdots (\vert\widehat{a}_p\vert + k)}{(\vert\widehat{b}_1\vert + k) \cdots (\vert\widehat{b}_q\vert + k)}  \, \frac{\vert\widehat{x}\vert}{(k + 1)}\\
	& = \frac{\left(1 +  \frac{\vert\widehat{a}_1\vert}{k}\right) \cdots \left(1 +  \frac{\vert\widehat{a}_p\vert}{k}\right)}{\left(1 +  \frac{\vert\widehat{b}_1\vert}{k}\right) \cdots \left(1 +  \frac{\vert\widehat{b}_q\vert}{k}\right)}  \, \frac{\vert\widehat{x}\vert}{\left(1 +  \frac{1}{k}\right)} \, k^{p - q - 1}.
\end{align*}
Thus, we conclude that the dual hypergeometric function \eqref{3.6} converges or diverges in the following regions
	\begin{enumerate}
	\item If $p \le q$, the dual function converges absolutely for all finite $\vert\widehat{x} \vert$.
	\item If $p = q + 1$, function converges for $\vert \widehat{x} \vert < 1$ and diverges for $\vert \widehat{x} \vert > 1$.
	\item If $p > q + 1$, the dual function diverges for all $\widehat{x} \ne 0$ until it terminates.
\end{enumerate}
If $p = q + 1$ and $\vert \widehat{x} \vert = 1$, then the ratio test fails and we have the following theorem:
\begin{theorem}
	Let $\widehat{a}_1, \dots, \widehat{a}_p, \widehat{b}_1, \dots, \widehat{b}_q \in \mathbb{D} \mathbb{R}$  such that $\Re {(\widehat{b}_1 + \cdots + \widehat{b}_q - (\widehat{a}_1 +  \cdots + \widehat{a}_p))} > 0$. Then, the generalized dual hypergeometric function converges absolutely for $p = q + 1$ and $\vert \widehat{x} \vert = 1$. 
\end{theorem}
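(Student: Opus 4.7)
The plan is to exploit the defining property of the dual absolute value, $|\widehat{u}|=|u_1|$ together with its multiplicativity $|\widehat{u}\widehat{v}|=|\widehat{u}|\,|\widehat{v}|$ (both recorded in Section~2), to reduce absolute convergence of the dual series at $|\widehat{x}|=1$ to the classical Gauss convergence test for ${}_pF_q$ at $|x|=1$.

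First, denoting by $A_k$ the $k$-th term of the series \eqref{3.6}, multiplicativity of $|\cdot|$ together with the inverse formula \eqref{e2.4} yields
$$|A_k|=\frac{|(\widehat{a}_1)_k|\cdots|(\widehat{a}_p)_k|}{|(\widehat{b}_1)_k|\cdots|(\widehat{b}_q)_k|}\cdot\frac{|\widehat{x}|^k}{k!}.$$
Since $|\widehat{x}|=1$ the factor $|\widehat{x}|^k$ is trivial. Expanding each dual Pochhammer via \eqref{2.10} as a product of factors $\widehat{c}+j=(c_1+j)+\varepsilon c_2$ and applying multiplicativity once more,
$$|(\widehat{c})_k|=\prod_{j=0}^{k-1}|c_1+j|.$$
Under the standard admissibility condition $c_1\notin\{0,-1,-2,\dots\}$ for every parameter, there exists an index $N_{\widehat{c}}$ past which $|c_1+j|=c_1+j$, so for $k$ large
$$|(\widehat{c})_k|=K_{\widehat{c}}\,\frac{\Gamma(c_1+k)}{\Gamma(c_1+N_{\widehat{c}})},$$
where $K_{\widehat{c}}>0$ absorbs the finitely many early factors.

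Substituting these expressions into $|A_k|$ shows that, up to a positive constant independent of $k$, the sequence $|A_k|$ coincides with the general term of the classical real-parameter series ${}_pF_q(a_{11},\dots,a_{p1};b_{11},\dots,b_{q1};1)$. Since absolute convergence is unaffected by a bounded positive multiplicative factor, I would then invoke the classical Gauss test: for $p=q+1$, this series converges absolutely if and only if $(b_{11}+\cdots+b_{q1})-(a_{11}+\cdots+a_{p1})>0$. But the real part of a dual number $c_1+\varepsilon c_2$ is by definition $c_1$, so this condition is precisely $\Re(\widehat{b}_1+\cdots+\widehat{b}_q-(\widehat{a}_1+\cdots+\widehat{a}_p))>0$, which is exactly the hypothesis.

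The main obstacle is bookkeeping rather than analysis: one must carefully handle the first $N_{\widehat{c}}$ factors, where $c_1+j$ may change sign or be small, but these contribute only a fixed positive constant $K_{\widehat{c}}$ and do not affect the tail behavior. The essential content of the argument is the simple but decisive observation that the dual absolute value is blind to the $\varepsilon$-component, so absolute convergence of the dual series collapses to the classical absolute convergence of the real-parameter hypergeometric series whose parameters are the real parts of $\widehat{a}_i,\widehat{b}_j$.
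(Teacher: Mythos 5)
Your argument is correct, but it takes a genuinely different route from the paper's. Your key move is that the dual absolute value is blind to the $\varepsilon$-part, so $|\widehat{c}+j|=|c_1+j|$ and, by multiplicativity, $|(\widehat{c})_k|=|(c_1)_k|$ \emph{exactly}; hence $|A_k|$ coincides with the absolute value of the $k$-th term of the classical series ${}_pF_q(a_{11},\dots,a_{p1};b_{11},\dots,b_{q1};x_1)$ with $|x_1|=1$, and the conclusion is immediate from the classical Gauss criterion for $p=q+1$. (Your intermediate bookkeeping with the constants $K_{\widehat{c}}$ and the gamma-function ratios is superfluous: since the identification $|(\widehat{c})_k|=|(c_1)_k|$ is exact, there is no multiplicative constant to track.) The paper instead re-runs the classical asymptotic argument inside the dual calculus: it sets $2\delta=\Re\bigl(\widehat{b}_1+\cdots+\widehat{b}_q-(\widehat{a}_1+\cdots+\widehat{a}_p)\bigr)$, shows $k^{1+\delta}|A_k|\to 0$ by invoking the limiting definition $\Gamma_d(\widehat{a})=\lim_{k\to\infty}(k-1)!\,k^{\widehat{a}}/(\widehat{a})_k$ together with $|k^{\widehat{w}}|=k^{\Re(\widehat{w})}$, and concludes by comparison with $\sum k^{-1-\delta}$. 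Your reduction is more economical and makes transparent that the dual statement carries no analytic content beyond the real-part condition, while the paper's version stays self-contained in the dual framework and exercises the dual gamma machinery it has just developed. One small point to make explicit (which the paper also leaves implicit): the nondegeneracy assumption $b_{j1}\notin\{0,-1,-2,\dots\}$ is needed so that the denominator factors $(\widehat{b}_j)_k$ are invertible.
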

\begin{proof}
	Let $\Re ({\widehat{b}_1 + \cdots + \widehat{b}_q - (\widehat{a}_1 +  \cdots + \widehat{a}_p)}) = 2 \delta > 0$. Then 
	\begin{align}
		k^{1 + \delta} \, \frac{ (\widehat{a}_1)_k \cdots  (\widehat{a}_p)_k}{(\widehat{b}_1)_k \cdots (\widehat{b}_q)_k \, k!} & = 	\frac{k^{1 + \delta}}{k !} \frac{(k - 1)! \, k^{\widehat{a}_1} \, k^{-\widehat{a}_1} \, (\widehat{a}_1)_k}{(k - 1)!} \cdots \frac{(k - 1)! \, k^{\widehat{a}_p} \, k^{-\widehat{a}_p} \, (\widehat{a}_p)_k}{(k - 1)!} \nonumber\\
		& \quad \times  \frac{ (k - 1)!}{(k - 1)! \, k^{\widehat{b}_1} \, k^{-\widehat{b}_1} \, (\widehat{b}_1)_k} \cdots \frac{ (k - 1)!}{(k - 1)! \, k^{\widehat{b}_q} \, k^{-\widehat{b}_q} \, (\widehat{b}_q)_k}\nonumber\\
		& = \frac{ k^{-\widehat{a}_1} \, (\widehat{a}_1)_k}{(k - 1)!} \cdots \frac{ k^{-\widehat{a}_p} \, (\widehat{a}_p)_k}{(k - 1)!}  \, \frac{  (k - 1)!}{k^{-\widehat{b}_1} \, (\widehat{b}_1)_k} \cdots \frac{  (k - 1)!}{k^{-\widehat{b}_q} \, (\widehat{b}_q)_k} \nonumber\\
		& \quad \times [(k - 1)!]^{p - q - 1}  k^{\delta +  \widehat{a}_1 +  \cdots + \widehat{a}_p - (\widehat{b}_1 + \cdots + \widehat{b}_q)}.
	\end{align} 
	Using limiting definition of dual gamma function as $\Gamma_d (\widehat{a}) = \lim_{k \rightarrow \infty} \frac{(k-1)! \ k^{\widehat{a}}}{(\widehat{a})_k}$, we have for $\vert \widehat{x}\vert = 1$ and $p = q + 1$:
	\begin{align}
	&	\lim_{k \rightarrow \infty}	k^{1 + \delta} \, \left \vert \frac{(\widehat{a}_1)_k \cdots (\widehat{a}_p)_k}{(\widehat{b}_1)_k \cdots (\widehat{b}_q)_k} \, \frac{ (\widehat{x})^k}{k!}\right \vert\nonumber\\
		& \le \lim_{k \rightarrow \infty} \left \vert \frac{\Gamma_d (\widehat{b}_1) \cdots \Gamma_d(\widehat{b}_q)}{\Gamma_d (\widehat{a}_1) \cdots \Gamma_d (\widehat{a}_p)} \right \vert \, \left \vert \frac{1}{k^{\widehat{b}_1 + \cdots + \widehat{b}_q - (\widehat{a}_1 +  \cdots + \widehat{a}_p) - \delta}} \right \vert = \left \vert \frac{\Gamma_d (\widehat{b}_1) \cdots \Gamma_d(\widehat{b}_q)}{\Gamma_d (\widehat{a}_1) \cdots \Gamma_d (\widehat{a}_p)} \right \vert \cdot  0 = 0,
	\end{align}
where we have utilized that $\vert \widehat{x}\vert = \vert \Re (\widehat{x})\vert$ and equation \eqref{2.4}, to obtain 
\begin{align}
	\vert k^{\widehat{b}_1 + \cdots + \widehat{b}_q - (\widehat{a}_1 + \cdots + \widehat{a}_p) - \delta} \vert & = \vert \Re \left(k^{\widehat{b}_1 + \cdots + \widehat{b}_q - (\widehat{a}_1 + \cdots + \widehat{a}_p) - \delta}\right)\vert\nonumber\\
	& = k^{\Re(\widehat{b}_1 + \cdots + \widehat{b}_q - (\widehat{a}_1 + \cdots + \widehat{a}_p)) - \delta} = k^{2 \delta - \delta} = k^\delta, \quad \delta > 0,
\end{align}	
and that $\lim_{k \to \infty} k^\delta = \infty$, since $\delta > 0$. 
	Therefore, by comparison theorem of numerical series of positive numbers we get the absolute convergence of \eqref{3.6}.
\end{proof}
Next, using the dual differential operator $\frac{d}{d \widehat{x}}$, we deduce
\begin{align}
	& \frac{d}{d \widehat{x}} {}_pF_q (\widehat{a}_1, \dots, \widehat{a}_p; \widehat{b}_1, \dots, \widehat{b}_q;  \widehat{x})\nonumber\\
	& = \frac{\widehat{a}_1 \cdots \widehat{a}_p}{\widehat{b}_1 \cdots \widehat{b}_q} \, {}_pF_q (\widehat{a}_1 + 1, \dots, \widehat{a}_p + 1; \widehat{b}_1 + 1, \dots, \widehat{b}_q + 1;  \widehat{x}).
\end{align}
Which, in general, can be written as
\begin{align}
	& \left(\frac{d}{d \widehat{x}}\right)^r {}_pF_q (\widehat{a}_1, \dots, \widehat{a}_p; \widehat{b}_1, \dots, \widehat{b}_q;  \widehat{x})\nonumber\\
	& = \frac{(\widehat{a}_1)_r \cdots (\widehat{a}_p)_r}{(\widehat{b}_1)_r \cdots (\widehat{b}_q)_r} \, {}_pF_q (\widehat{a}_1 + r, \dots, \widehat{a}_p + r; \widehat{b}_1 + r, \dots, \widehat{b}_q + r;  \widehat{x}).
\end{align}
An intensive use of the operator $\frac{d}{d \widehat{x}}$ yield the dual differential equation obeyed by the dual valued generalized hypergeometric function. Let $\widehat{\theta}$ be the dual differential operator defined by $\widehat{\theta} = \widehat{x} \, \frac{d}{d \widehat{x}}$. Then one can evaluate
\begin{align}
	& [\widehat{\theta} (\widehat{\theta} + \widehat{b}_1 - 1) \cdots (\widehat{\theta} + \widehat{b}_q - 1)] \, {}_pF_q (\widehat{a}_1, \dots, \widehat{a}_p; \widehat{b}_1, \dots, \widehat{b}_q;  \widehat{x})\nonumber\\
	& = \sum_{k = 0}^{\infty} \frac{(\widehat{a}_1)_{k + 1} \cdots (\widehat{a}_p)_{k + 1}}{(\widehat{b}_1)_k \cdots (\widehat{b}_q)_k} \, \frac{ (\widehat{x})^{k + 1}}{k!}. \label{4.6}
\end{align}
Also
\begin{align}
	& [ (\widehat{\theta} + \widehat{a}_1) \cdots (\widehat{\theta} + \widehat{a}_p)] \, {}_pF_q (\widehat{a}_1, \dots, \widehat{a}_p; \widehat{b}_1, \dots, \widehat{b}_q;  \widehat{x})\nonumber\\
	& = \sum_{k = 0}^{\infty} \frac{(\widehat{a}_1)_{k + 1} \cdots (\widehat{a}_p)_{k + 1}}{(\widehat{b}_1)_k \cdots (\widehat{b}_q)_k} \, \frac{ (\widehat{x})^k}{k!}. \label{4.7}
\end{align}
Thus from \eqref{4.6} and \eqref{4.7}
\begin{align}
	& [ \widehat{\theta} (\widehat{\theta} + \widehat{b}_1 - 1) \cdots (\widehat{\theta} + \widehat{b}_q - 1) - \widehat{x} \, (\widehat{\theta} + \widehat{a}_1) \cdots (\widehat{\theta} + \widehat{a}_p)] {}_pF_q (\widehat{a}_1, \dots, \widehat{a}_p; \widehat{b}_1, \dots, \widehat{b}_q;  \widehat{x}) = 0.
\end{align}
Furthermore, the contiguous relations can be derived using the dual differential operator. When the parameters of the generalized dual hypergeometric function are incremented or decremented by unity, the resulting functions are known as contiguous functions, and the relations between them are referred to as contiguous function relations. Rainville \cite{edr45} established these relations for the classical generalized hypergeometric function. Following this approach, we extend the results to the generalized dual-valued hypergeometric function. 

There are $(p+q-1)$ relations that connect either $F$, $F(\widehat{a}_1+)$ and $F(\widehat{a}_k+)$ for $k= 2, \dots, p$, or  $F$, $F(\widehat{a}_1+)$ and $F(\widehat{b}_j-)$ for $j= 1, \dots, q$. These relations are given by
\begin{align}
	&(\widehat{a}_1-\widehat{a}_k)F = \widehat{a}_1F(\widehat{a}_1+) - \widehat{a}_kF(\widehat{a}_k+), \qquad k = 2,  \dots, p, \label{eq6.2} \\
	&(\widehat{a}_1-\widehat{b}_k+1)F = \widehat{a}_1F(\widehat{a}_1+) - (\widehat{b}_k-1)F(\widehat{b}_k-), \qquad     k = 1, \dots, q.\label{eq6.3}
\end{align}
Also, there are $(p+1)$ relations each containing $F$ and $(q+1)$ of its contiguous dual functions given by
\begin{align}
	& \widehat{a}_1F  = \widehat{a}_1F(\widehat{a}_1+) - \widehat{x} \sum_{j=1}^{q} U_j F(\widehat{b}_j+), \qquad p < q, \label{eq6.4}\\
	&(\widehat{a}_1 + \widehat{x})F  = \widehat{a}_1F(\widehat{a}_1+) - \widehat{x} \sum_{j=1}^{q} U_j F(\widehat{b}_j+), \qquad p = q,\label{eq6.5} \\
	&\left[(1-\widehat{x})\widehat{a}_1 + \left(\sum_{j=1}^{p} \widehat{a}_j - \sum_{j=1}^{q} \widehat{b}_j\right)\widehat{x}\right]F \nonumber\\
	&\qquad = (1-\widehat{x})\widehat{a}_1 \, F(\widehat{a}_1+) - \widehat{x} \sum_{j=1}^{q} U_j F(\widehat{b}_j+), \quad p = q+1, \label{eq6.6}
\end{align} 
where $U_j = \displaystyle\prod_{s=1}^{p} \ \frac{\widehat{a}_s-\widehat{b}_j}{\widehat{b}_j} \prod_{s=1, s\ne j}^{q} \frac{1}{ \widehat{b}_s-\widehat{b}_j}$.

The remaining $p$ relations are given by 
\begin{align}
	F &= F(\widehat{a}_k-) + \widehat{x} \sum_{j=1}^{q} W_{j,k} \, F(\widehat{b}_j+); \quad p\leq q; \ k = 1,  \dots, p,
	\label{eq6.8} \\
	(1-\widehat{x})F & = F(\widehat{a}_k-) + \widehat{x} \sum_{j=1}^{q} W_{j,k} \, F(\widehat{b}_j+); \quad p = q +1; \ k = 1,  \dots, p, \label{eq6.9}
\end{align}
where $W_{j,k} = \frac{1}{\widehat{b}_j} \displaystyle
\prod_{s=1, s\ne j}^{q} \frac{1}{\widehat{b}_s-\widehat{b}_j}  \prod_{s=1, s\ne k }^{p}(\widehat{a}_s-\widehat{b}_j)$ and 
\begin{align*}
F & = 	{}_pF_q (\widehat{a}_1, \dots, \widehat{a}_p; \widehat{b}_1, \dots, \widehat{b}_q;  \widehat{x});
\\[5pt]
F(\widehat{a}_i \pm) & = {}_pF_q (\widehat{a}_1, \dots, \widehat{a}_{i - 1}, \widehat{a}_i \pm 1, \widehat{a}_{i + 1}, \dots, \widehat{a}_p; \widehat{b}_1, \dots, \widehat{b}_q;  \widehat{x}), \quad 1 \le i \le p;
\\[5pt]
F(\widehat{b}_j \pm) & = {}_pF_q (\widehat{a}_1, \dots, \widehat{a}_p; \widehat{b}_1, \dots, \widehat{b}_{j - 1}, \widehat{b}_j \pm 1, \widehat{b}_{j + 1}, \dots, \widehat{b}_q;  \widehat{x}), \quad 1 \le j \le q.
\end{align*}
Another prominent way to represent a hypergeometric function is through its integral form. In the following theorem, we explore the integral representations of the dual-valued generalized hypergeometric function. 
\begin{theorem}\label{t4.1}
Let $\widehat{a}_1, \dots, \widehat{a}_p, \widehat{b}_1, \dots, \widehat{b}_q$ and $\widehat{x}$ be dual numbers such that $\Re(\widehat{a}_1), \Re(\widehat{b}_1), \Re(\widehat{b}_1 - \widehat{a}_1) > 0$. Then, the generalized dual hypergeometric function can be represented in the following integral forms:
\begin{align}
& {}_pF_q (\widehat{a}_1, \dots, \widehat{a}_p; \widehat{b}_1, \dots, \widehat{b}_q;  \widehat{x})\nonumber\\
& = \frac{\Gamma_d(\widehat{b}_1)}{\Gamma_d(\widehat{a}_1) \, \Gamma_d(\widehat{b}_1 - \widehat{a}_1)} \int_{0}^{1} u^{\widehat{a}_1 - 1} \, (1 - u)^{\widehat{b}_1 - \widehat{a}_1 - 1} \, {}_pF_q (\widehat{a}_2, \dots, \widehat{a}_p; \widehat{b}_2, \dots, \widehat{b}_q; u \,  \widehat{x}) \, d u;\label{4.16}
\\[5pt]
& = \frac{\Gamma_d(\widehat{b}_1)}{\Gamma_d(\widehat{a}_1) \, \Gamma_d(\widehat{b}_1 - \widehat{a}_1)} \int_{0}^{\infty} u^{\widehat{a}_1 - 1} \, (1 + u)^{-\widehat{a}_1 - \widehat{b}_1 } \, {}_pF_q \left(\widehat{a}_2, \dots, \widehat{a}_p; \widehat{b}_2, \dots, \widehat{b}_q;   \frac{u}{1 + u}\widehat{x}\right) \, d u;\label{4.17}
\\[5pt]
& = \frac{\Gamma_d(\widehat{b}_1)}{\Gamma_d(\widehat{a}_1) \, \Gamma_d(\widehat{b}_1 - \widehat{a}_1)} \, b^{\widehat{a}_1} \int_{0}^{\infty} u^{\widehat{a}_1 - 1} \, (1 + bu)^{-\widehat{a}_1 - \widehat{b}_1 } \, {}_pF_q \left(\widehat{a}_2, \dots, \widehat{a}_p; \widehat{b}_2, \dots, \widehat{b}_q;   \frac{bu}{1 + bu}\widehat{x}\right) \, d u.\label{4.18}
\end{align}
\end{theorem}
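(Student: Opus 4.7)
The plan is to establish the first representation \eqref{4.16} from the series definition by inserting an Eulerian Beta integral term by term, and then obtain \eqref{4.17} and \eqref{4.18} from \eqref{4.16} by elementary changes of variable.

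First I would isolate the ratio $(\widehat{a}_1)_k/(\widehat{b}_1)_k$ using the dual Beta function. From the definition \eqref{2.10} of the dual shifted factorial and the fundamental identity $\beta_d(\widehat{u},\widehat{v}) = \Gamma_d(\widehat{u})\Gamma_d(\widehat{v})/\Gamma_d(\widehat{u}+\widehat{v})$ established in \eqref{e3.4}, I obtain
\begin{align*}
\frac{(\widehat{a}_1)_k}{(\widehat{b}_1)_k}
= \frac{\Gamma_d(\widehat{a}_1+k)\,\Gamma_d(\widehat{b}_1)}{\Gamma_d(\widehat{a}_1)\,\Gamma_d(\widehat{b}_1+k)}
= \frac{\Gamma_d(\widehat{b}_1)}{\Gamma_d(\widehat{a}_1)\,\Gamma_d(\widehat{b}_1-\widehat{a}_1)}\;\beta_d(\widehat{a}_1+k,\widehat{b}_1-\widehat{a}_1),
\end{align*}
and then express the Beta term as the integral $\int_0^1 u^{\widehat{a}_1+k-1}(1-u)^{\widehat{b}_1-\widehat{a}_1-1}\,du$ (well defined because $\Re(\widehat{a}_1)$ and $\Re(\widehat{b}_1-\widehat{a}_1)>0$, using the dual-power expansion $u^{\varepsilon c}=1+\varepsilon c\log u$ as throughout Section~2 so the integrand splits into an ordinary real-part integral plus an $\varepsilon$-coefficient integral).

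Next I would substitute this expression into the series \eqref{3.6} and interchange summation and integration to produce
\begin{align*}
{}_pF_q(\widehat{a}_1,\dots;\widehat{b}_1,\dots;\widehat{x})
= \frac{\Gamma_d(\widehat{b}_1)}{\Gamma_d(\widehat{a}_1)\,\Gamma_d(\widehat{b}_1-\widehat{a}_1)}
\int_0^1 u^{\widehat{a}_1-1}(1-u)^{\widehat{b}_1-\widehat{a}_1-1}
\sum_{k=0}^{\infty}\frac{(\widehat{a}_2)_k\cdots(\widehat{a}_p)_k}{(\widehat{b}_2)_k\cdots(\widehat{b}_q)_k}\frac{(u\widehat{x})^k}{k!}\,du,
\end{align*}
which is precisely \eqref{4.16}. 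The interchange is legitimate in the regions of convergence listed after \eqref{3.6}: because $|\widehat{x}|=|x_1|$ and the $\varepsilon$-parts contribute only bounded logarithmic factors on $[0,1]$, the series converges uniformly in $u$ and one can reduce the justification to the classical Fubini argument applied separately to the real part of the integrand and to its $\varepsilon$-coefficient.

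For \eqref{4.17} I would perform the substitution $u=v/(1+v)$ in \eqref{4.16}, so that $1-u=1/(1+v)$, $du=(1+v)^{-2}dv$, and the limits become $0$ to $\infty$; collecting the powers of $(1+v)$ using the dual exponent rules of \eqref{2.7} yields the stated form after renaming $v$ back to $u$. Finally, \eqref{4.18} follows from \eqref{4.17} by the scaling $u=bv$, using the identity $(bv)^{\widehat{a}_1-1}=b^{\widehat{a}_1-1}v^{\widehat{a}_1-1}$ (valid for $b>0$) so that one factor of $b$ is absorbed by $du=b\,dv$ and the prefactor $b^{\widehat{a}_1}$ emerges. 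The main obstacle I foresee is not conceptual but bookkeeping: one must handle the $\varepsilon$-part of the dual powers $(1-u)^{\widehat{b}_1-\widehat{a}_1-1}$, $(1+u)^{-\widehat{a}_1-\widehat{b}_1}$ and $(bu)^{\widehat{a}_1-1}$ carefully, repeatedly invoking $\varepsilon^2=0$ to keep first-order terms only and to verify that the changes of variable preserve the dual identities established in Section~2.
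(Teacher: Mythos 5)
Your proposal is correct and follows essentially the same route as the paper: express $(\widehat{a}_1)_k/(\widehat{b}_1)_k$ through the dual Beta function via \eqref{e3.4}, insert its integral representation into the series \eqref{3.6}, interchange sum and integral to get \eqref{4.16}, and derive the other two forms by the substitutions $u=v/(1+v)$ and $u=bv$ (the paper invokes the "other integrals of the beta function" directly, which amounts to the same changes of variable). Your added care with the Fubini step and the $\varepsilon$-bookkeeping only strengthens the paper's one-line justification; note, incidentally, that carrying out your substitution literally yields the exponent $-\widehat{b}_1-k$ on $(1+u)$ after absorbing $\bigl(\tfrac{u}{1+u}\bigr)^k$ into the inner series, which indicates that the exponent $-\widehat{a}_1-\widehat{b}_1$ printed in \eqref{4.17} and \eqref{4.18} should read $\widehat{a}_1-\widehat{b}_1$ wherever the prefactor is $u^{\widehat{a}_1-1}$.
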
 
\begin{proof}
The definition of the dual shifted factorial in terms of dual gamma function yields
	\begin{align}
		\frac{(\widehat{a}_1)_k}{(\widehat{b}_1)_k} = \frac{\Gamma_d(\widehat{b}_1)}{\Gamma_d(\widehat{a}_1)} \, \frac{\Gamma_d(\widehat{a}_1 + k)}{\Gamma_d(\widehat{b}_1 + k)} = \frac{\Gamma_d(\widehat{b}_1)}{\Gamma_d(\widehat{a}_1) \, \Gamma_d(\widehat{b}_1 - \widehat{a}_1)} \, \beta_d (\widehat{a}_1 + k, \widehat{b}_1 - \widehat{a}_1).
	\end{align}
Now, using the integral definition of dual beta function given in \eqref{2.9}, we write
\begin{align}
	\frac{(\widehat{a}_1)_k}{(\widehat{b}_1)_k} =  \frac{\Gamma_d(\widehat{b}_1)}{\Gamma_d(\widehat{a}_1) \, \Gamma_d(\widehat{b}_1 - \widehat{a}_1)} \,  \int_{0}^{1} u^{\widehat{a}_1 + k - 1} \, (1 - u)^{\widehat{b}_1 - \widehat{a}_1 - 1} \, d u.
\end{align}
Implementing this into the definition of generalized dual hypergeometric function gives the required integral \eqref{4.16}. Similarly using the other integrals of beta function, we can obtain the integrals from \eqref{4.17} and \eqref{4.18}. 
\end{proof}
\section{Special Cases}
Special cases of the generalized hypergeometric function give rise to various elementary classical functions. Here, we provide a list of dual functions derived from specific values of the generalized dual hypergeometric function. Utilizing the established properties of this generalized dual function, we can systematically deduce corresponding properties for its special cases. This allows  to extend fundamental results to specific dual functions, enhancing our understanding of their behavior and relationships.

If $p = 0 = q$,  that is, no numerator and denominator parameters are involved, then we get
\begin{align*}
{}_0F_0 (-; -;  \widehat{x}) = \sum_{k = 0}^{\infty} \frac{(\widehat{x})^k}{k !} = e^{\widehat{x}} = e^{x_1} (1 + \varepsilon \, x_2). 
\end{align*}
In case $p = 1, q = 0$, we obtain the dual analogue of binomial theorem as 
\begin{align}
{}_1F_0 (\widehat{a}_1; -;  \widehat{x})  = \sum_{k = 0}^{\infty} \frac{(\widehat{a}_1)_k \, (\widehat{x})^k}{k !} & = (1 - \widehat{x})^{-\widehat{a}_1}. \label{5.1}
\end{align}
Similarly, we can deduce the following dual valued functions for distinct values of the numerator and denominator parameters. 
\begin{align}
	{}_1F_1 (\widehat{a}_1; \widehat{b}_1;  \widehat{x}) & = \sum_{k = 0}^{\infty} \frac{(\widehat{a}_1)_k \, (\widehat{x})^k}{(\widehat{b}_1)_k \, k !};\label{5.2}
\\[5pt]
	{}_2F_1 (\widehat{a}_1, \widehat{a}_2; \widehat{b}_1;  \widehat{x}) & = \sum_{k = 0}^{\infty} \frac{(\widehat{a}_1)_k \, (\widehat{a}_2)_k \, (\widehat{x})^k}{(\widehat{b}_1)_k \, k !};\label{5.3}
\\[5pt]
	\widehat{x} \, {}_2F_1 \left(\frac{1}{2}, \frac{1}{2}; \frac{3}{2};  (\widehat{x})^2\right) & = \arcsin \widehat{x};
\\[5pt]
	\widehat{x} \, {}_2F_1 \left(\frac{1}{2}, 1; \frac{3}{2};  -(\widehat{x})^2\right) & = \arctan \widehat{x};
\\[5pt]
	\widehat{x} \, {}_2F_1 \left(1, 1; 2;  - \widehat{x} \right) = \widehat{x} - \frac{(\widehat{x})^2}{2!} + \frac{(\widehat{x})^3}{3!} - \cdots & = \log (1 + \widehat{x});
\\[5pt]
	2 \, \widehat{x} \, {}_2F_1 \left(\frac{1}{2}, 1; \frac{3}{2};  (\widehat{x})^2\right) & = \log \left(\frac{1 + \widehat{x}}{1 - \widehat{x}}\right);
\\[5pt]
{}_2F_1 \left(-n, 1; 1;  - \widehat{x} \right) & =  (1 + \widehat{x})^n.
\end{align}
The functions defined in \eqref{5.2} and \eqref{5.3} will be the dual versions of confluent and Gauss hypergeometric functions given by
\begin{align}
	{}_1F_1 ( {a}_1;  {b}_1;   {x}) & = \sum_{k = 0}^{\infty} \frac{({a}_1)_k \,  {x}^k}{({b}_1)_k \, k !};
\\[5pt]
{}_2F_1 ({a}_1, {a}_2;  {b}_1;   {x}) & = \sum_{k = 0}^{\infty} \frac{({a}_1)_k \, ({a}_2)_k \, {x}^k}{({b}_1)_k \, k !}.
\end{align}
Since these two functions have a wide range of applications in physics, mathematics, engineering, and probability, we study their dual versions in detail. 

The dual valued confluent hypergeometric function obtained as particular case of generalized dual hypergeometric function converges absolutely for $\vert \widehat{x}\vert < 1$. It obeys the dual differential equation
\begin{align}
[ \widehat{\theta} (\widehat{\theta} + \widehat{b}_1 - 1)   - \widehat{x} (\widehat{\theta} + \widehat{a}_1)  ] {}_1F_1 (\widehat{a}_1; \widehat{b}_1;  \widehat{x}) = 0.
\end{align}
Following differential formulas are satisfied by the dual confluent function
\begin{align}
		& \left(\frac{d}{d \widehat{x}}\right)^r {}_1F_1 (\widehat{a}_1; \widehat{b}_1;  \widehat{x}) = \frac{(\widehat{a}_1)_r }{(\widehat{b}_1)_r } \, {}_1F_1 (\widehat{a}_1 + r; \widehat{b}_1 + r;  \widehat{x});
		\\[5pt]
		&\left(\frac{d}{d \widehat{x}}\right)^r \left[(\widehat{x})^{\widehat{a}_1 + r - 1} {}_1F_1 (\widehat{a}_1; \widehat{b}_1;  \widehat{x})\right]  = (\widehat{a}_1)_r \, (\widehat{x})^{\widehat{a}_1 - 1} \, {}_1F_1 (\widehat{a}_1 + r; \widehat{b}_1;  \widehat{x});
		\\[5pt]
	&	\left(\frac{d}{d \widehat{x}}\right)^r \left[(\widehat{x})^{\widehat{b}_1 - 1} \, {}_1F_1 (\widehat{a}_1; \widehat{b}_1;  \widehat{x})\right] = (-1)^r \, (1 - \widehat{b}_1)_r \, (\widehat{x})^{\widehat{b}_1 - 1 - r} \, {}_1F_1 (\widehat{a}_1; \widehat{b}_1 - r;  \widehat{x});
		\\[5pt]
	&	\left(\frac{d}{d \widehat{x}}\right)^r \left[e^{-\widehat{x}} \, {}_1F_1 (\widehat{a}_1; \widehat{b}_1;  \widehat{x})\right] = \frac{(-1)^r \, (\widehat{b}_1 - \widehat{a}_1)_r}{(\widehat{b}_1)_r} \, {}_1F_1 (\widehat{b}_1 - \widehat{a}_1 + r; \ \widehat{b}_1 + r; \ -\widehat{x});
		\\[5pt]
		&\left(\frac{d}{d \widehat{x}}\right)^r \left[e^{-\widehat{x}} \, (\widehat{x})^{\widehat{b}_1 - \widehat{a}_1 + r - 1} \, {}_1F_1 (\widehat{a}_1; \widehat{b}_1;  \widehat{x})\right]  = (\widehat{b}_1 - \widehat{a}_1)_r \, (\widehat{x})^{\widehat{b}_1 - \widehat{a}_1 - 1} \,  {}_1F_1 (\widehat{b}_1 - \widehat{a}_1 + r; \ \widehat{b}_1; \ -\widehat{x});
		\\[5pt]
	&	\left(\frac{d}{d \widehat{x}}\right)^r \left[e^{-\widehat{x}} \, (\widehat{x})^{\widehat{b}_1  - 1} \, {}_1F_1 (\widehat{a}_1; \widehat{b}_1;  \widehat{x})\right]  = (-1)^r \,  (1 - \widehat{b}_1)_r \, (\widehat{x})^{\widehat{b}_1 - r - 1} {}_1F_1 (\widehat{b}_1 - \widehat{a}_1;  \widehat{b}_1 - r;  -\widehat{x}).
\end{align}
The proofs of these dual differential formulas are very elementary and hence we omit them. The dual valued confluent hypergeometric function satisfies the following contiguous relations:
\begin{align}
	(\widehat{a}_1 - \widehat{b}_1 + 1) \, {}_1F_1(\widehat{a}_1; \widehat{b}_1; \widehat{x}) & = \widehat{a}_1 \, {}_1F_1(\widehat{a}_1 + 1; \widehat{b}_1; \widehat{x}) - (\widehat{b}_1 - 1) \, {}_1F_1(\widehat{a}_1; \widehat{b}_1 - 1; \widehat{x});
	\\[5pt]
	\widehat{b}_1 \, (\widehat{a}_1 + \widehat{x}) \, {}_1F_1(\widehat{a}_1; \widehat{b}_1; \widehat{x}) & = \widehat{a}_1 \,  \widehat{b}_1 \, {}_1F_1(\widehat{a}_1 + 1; \widehat{b}_1; \widehat{x}) - (\widehat{a}_1 - \widehat{b}_1) \, \widehat{x} \, {}_1F_1(\widehat{a}_1; \widehat{b}_1 + 1; \widehat{x});
	\\[5pt]
	\widehat{b}_1 \, {}_1F_1(\widehat{a}_1; \widehat{b}_1; \widehat{x}) &= \widehat{b}_1 \, {}_1F_1(\widehat{a}_1 - 1; \widehat{b}_1; \widehat{x}) + \widehat{x} \, {}_1F_1(\widehat{a}_1; \widehat{b}_1 + 1; \widehat{x}).
\end{align} 
We have the following integral representations of the dual confluent function. The proofs are similar to the Theorem~\ref{t4.1} and hence are omitted. 
\begin{align}
{}_1F_1 (\widehat{a}_1; \widehat{b}_1;  \widehat{x}) & =  \frac{\Gamma_d(\widehat{b}_1)}{\Gamma_d(\widehat{a}_1) \, \Gamma_d(\widehat{b}_1 - \widehat{a}_1)} \,  \int_{0}^{1} u^{\widehat{a}_1 - 1} \, (1 - u)^{\widehat{b}_1 - \widehat{a}_1 - 1} \, e^{u \, \widehat{x}} \, d u;\nonumber\\
& = \frac{\Gamma_d(\widehat{b}_1)}{\Gamma_d(\widehat{a}_1) \, \Gamma_d(\widehat{b}_1 - \widehat{a}_1)} \, (\widehat{x})^{1 - \widehat{b}_1} \int_{0}^{\widehat{x}} v^{\widehat{a}_1 - 1} \, (\widehat{x} - v)^{\widehat{b}_1 - \widehat{a}_1 - 1} \, e^{v} \, d v;\nonumber\\
& = \frac{\Gamma_d(\widehat{b}_1)}{\Gamma_d(\widehat{a}_1) \, \Gamma_d(\widehat{b}_1 - \widehat{a}_1)} \, e^{\widehat{x}} \int_{0}^{1} v^{\widehat{a}_1 - 1} \, (1 - v)^{\widehat{b}_1 - \widehat{a}_1 - 1} \, e^{- v \widehat{x}} \, d v;\nonumber\\
& = \frac{\Gamma_d(\widehat{b}_1)}{\Gamma_d(\widehat{a}_1) \, \Gamma_d(\widehat{b}_1 - \widehat{a}_1)} \, 2^{1 - \widehat{b}_1} \, e^{\frac{\widehat{x}}{2}} \int_{-1}^{1} \left(\frac{1 - v}{1 + v}\right)^{\widehat{a}_1 - 1} \, (1 + v)^{\widehat{b}_1 -  2} \, e^{- \frac{1}{2} v \widehat{x}} \, d v;\nonumber\\
& = \frac{\Gamma_d(\widehat{b}_1)}{\Gamma_d(\widehat{a}_1) \, \Gamma_d(\widehat{b}_1 - \widehat{a}_1)} \, 2^{1 - \widehat{b}_1} \, e^{\frac{\widehat{x}}{2}} \int_{-1}^{1} \left(\frac{1 + v}{1 - v}\right)^{\widehat{a}_1 - 1} \, (1 - v)^{\widehat{b}_1 -  2} \, e^{ \frac{1}{2} v \widehat{x}} \, d v.
\end{align}
In the following theorem, we prove several dual integral formulas in term of dual confluent hypergeometric function. 
\begin{theorem} 
The integral formulas for dual confluent function given below hold true:
\begin{align}
	\int {}_1F_1 (\widehat{a}_1; \widehat{b}_1;  \widehat{x}) \, d \widehat{x} & = \frac{\widehat{b}_1 - 1}{\widehat{a}_1 - 1} \, {}_1F_1 (\widehat{a}_1 - 1; \widehat{b}_1 - 1;  \widehat{x}) + C;\label{5.10}\\
	\int (\widehat{x})^{\widehat{b}_1 - 1} \, {}_1F_1 (\widehat{a}_1; \widehat{b}_1;  \widehat{x}) \, d \widehat{x} & = \frac{(\widehat{x})^{\widehat{b}_1}}{\widehat{b}_1} \, {}_1F_1 (\widehat{a}_1; \widehat{b}_1 + 1;  \widehat{x}) + C;\label{5.11}\\
	\int (\widehat{x})^{\widehat{a}_1 - 2} \, {}_1F_1 (\widehat{a}_1; \widehat{b}_1;  \widehat{x}) \, d \widehat{x} & = \frac{(\widehat{x})^{\widehat{a}_1 - 1}}{\widehat{a}_1 - 1} \, {}_1F_1 (\widehat{a}_1 - 1; \widehat{b}_1;  \widehat{x}) + C;\label{5.12}\\
	\int e^{-\widehat{x}} \, {}_1F_1 (\widehat{a}_1; \widehat{b}_1;  \widehat{x}) \, d \widehat{x} & = \frac{e^{-\widehat{x}} \,  {\widehat{b}_1 - 1}}{1 + \widehat{a}_1 - \widehat{b}_1} \, {}_1F_1 (\widehat{a}_1; \widehat{b}_1 - 1;  \widehat{x}) + C,\label{5.13}
\end{align}
where $C$ is an integration constant. 
\end{theorem}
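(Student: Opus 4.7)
The plan is to verify each of the four integral formulas by differentiating the proposed right-hand side and checking that one recovers the integrand. The differential identities and contiguous relations for ${}_1F_1$ established just above make this direct verification feasible without any series manipulation.

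For \eqref{5.10}, the approach is to invoke the basic dual derivative formula
\[
\frac{d}{d\widehat{x}}\,{}_1F_1(\widehat{a}_1; \widehat{b}_1; \widehat{x}) = \frac{\widehat{a}_1}{\widehat{b}_1}\,{}_1F_1(\widehat{a}_1 + 1; \widehat{b}_1 + 1; \widehat{x}),
\]
relabel $\widehat{a}_1 \mapsto \widehat{a}_1 - 1$ and $\widehat{b}_1 \mapsto \widehat{b}_1 - 1$, solve for the antiderivative and add a constant $C$.

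For \eqref{5.11} and \eqref{5.12}, the plan is to use the weighted differential formulas with $r=1$. Specifically, for \eqref{5.11} the $r=1$ case of the third listed differential formula, shifted via $\widehat{b}_1 \mapsto \widehat{b}_1 + 1$, gives
\[
\frac{d}{d\widehat{x}}\bigl[(\widehat{x})^{\widehat{b}_1}\,{}_1F_1(\widehat{a}_1; \widehat{b}_1 + 1; \widehat{x})\bigr] = \widehat{b}_1\,(\widehat{x})^{\widehat{b}_1 - 1}\,{}_1F_1(\widehat{a}_1; \widehat{b}_1; \widehat{x}),
\]
from which \eqref{5.11} follows by dividing by $\widehat{b}_1$ and integrating. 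For \eqref{5.12}, I would take the $r=1$ case of the second listed differential formula with $\widehat{a}_1 \mapsto \widehat{a}_1 - 1$, namely
\[
\frac{d}{d\widehat{x}}\bigl[(\widehat{x})^{\widehat{a}_1 - 1}\,{}_1F_1(\widehat{a}_1 - 1; \widehat{b}_1; \widehat{x})\bigr] = (\widehat{a}_1 - 1)\,(\widehat{x})^{\widehat{a}_1 - 2}\,{}_1F_1(\widehat{a}_1; \widehat{b}_1; \widehat{x}),
\]
and integrate after dividing by $\widehat{a}_1 - 1$.

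The main obstacle is \eqref{5.13}. Here a single differential formula does not suffice: differentiating $\tfrac{e^{-\widehat{x}}(\widehat{b}_1 - 1)}{1+\widehat{a}_1 - \widehat{b}_1}\,{}_1F_1(\widehat{a}_1; \widehat{b}_1 - 1; \widehat{x})$ by the product rule together with the basic derivative formula produces a linear combination of $e^{-\widehat{x}}\,{}_1F_1(\widehat{a}_1; \widehat{b}_1 - 1; \widehat{x})$ and $e^{-\widehat{x}}\,{}_1F_1(\widehat{a}_1 + 1; \widehat{b}_1; \widehat{x})$. The plan is then to collapse this combination using the contiguous relation
\[
\widehat{a}_1\,{}_1F_1(\widehat{a}_1 + 1; \widehat{b}_1; \widehat{x}) - (\widehat{b}_1 - 1)\,{}_1F_1(\widehat{a}_1; \widehat{b}_1 - 1; \widehat{x}) = (\widehat{a}_1 - \widehat{b}_1 + 1)\,{}_1F_1(\widehat{a}_1; \widehat{b}_1; \widehat{x}),
\]
so that the coefficient $1 + \widehat{a}_1 - \widehat{b}_1$ cancels exactly, leaving $e^{-\widehat{x}}\,{}_1F_1(\widehat{a}_1; \widehat{b}_1; \widehat{x})$. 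Integration then recovers \eqref{5.13}. Throughout, we must assume $\widehat{a}_1 \neq 1$ in \eqref{5.10} and \eqref{5.12}, $\widehat{b}_1 \neq 0$ in \eqref{5.11}, and $1 + \widehat{a}_1 - \widehat{b}_1 \neq 0$ in \eqref{5.13}, so that the divisions are valid in the dual-number sense via \eqref{e2.4}.
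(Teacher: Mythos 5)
Your proposal is correct, but it takes a genuinely different route from the paper. The paper proves only \eqref{5.10}, and does so by series manipulation: it integrates the defining series of ${}_1F_1(\widehat{a}_1;\widehat{b}_1;\widehat{x})$ term by term using $\int (\widehat{x})^k\,d\widehat{x} = (\widehat{x})^{k+1}/(k+1)$, then re-indexes via $(\widehat{a}_1)_k/(k+1)! = \frac{\widehat{b}_1-1}{\widehat{a}_1-1}\,(\widehat{a}_1-1)_{k+1}/\big((\widehat{b}_1-1)_{k+1}\cdots\big)$-type identities to recognize the shifted confluent series, and dismisses \eqref{5.11}--\eqref{5.13} with ``similarly.'' You instead verify each right-hand side by dual differentiation, leaning on the differential formulas and the first contiguous relation already recorded for the dual confluent function. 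Your approach has two advantages: it treats all four identities uniformly and explicitly, and it handles \eqref{5.13} cleanly --- that case is the one where ``similarly'' is least convincing, since a direct series integration of $e^{-\widehat{x}}\,{}_1F_1$ would require a Cauchy-product rearrangement, whereas your use of the contiguous relation $(\widehat{a}_1-\widehat{b}_1+1)F = \widehat{a}_1 F(\widehat{a}_1+) - (\widehat{b}_1-1)F(\widehat{b}_1-)$ collapses the product-rule output exactly as claimed. The paper's series route buys a derivation that does not presuppose the differential formulas (which the paper states without proof), so it is marginally more self-contained; yours is shorter and more systematic. One small imprecision: the non-degeneracy conditions should be imposed on real parts, e.g.\ $\Re(\widehat{a}_1)\neq 1$ rather than $\widehat{a}_1\neq 1$, since by \eqref{e2.4} a dual number with vanishing real part is nonzero yet not invertible; the same applies to $\widehat{b}_1$ and $1+\widehat{a}_1-\widehat{b}_1$.
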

\begin{proof}
	From the definition of dual confluent function, we have
\begin{align}
\int {}_1F_1 (\widehat{a}_1; \widehat{b}_1;  \widehat{x}) \, d \widehat{x} 
 & = \int \sum_{k = 0}^{\infty} \frac{(\widehat{a}_1)_k}{(\widehat{b}_1)_k} \, \frac{(\widehat{x})^k}{k !} \, d \widehat{x}\nonumber\\
& = \sum_{k = 0}^{\infty} \frac{(\widehat{a}_1)_k}{(\widehat{b}_1)_k \, k!} \, \int (\widehat{x})^k \, d \widehat{x} \nonumber\\
& = \sum_{k = 0}^{\infty} \frac{(\widehat{a}_1)_k}{(\widehat{b}_1)_k \, (k + 1)!} \,  (\widehat{x})^{k + 1}  + C\nonumber\\
& = \frac{\widehat{b}_1 - 1}{\widehat{a}_1 - 1} \, \sum_{k = 0}^{\infty} \frac{(\widehat{a}_1 - 1)_k}{(\widehat{b}_1 - 1)_k} \, \frac{(\widehat{x})^k}{k !} + C\nonumber\\
& = \frac{\widehat{b}_1 - 1}{\widehat{a}_1 - 1} \, {}_1F_1 (\widehat{a}_1 - 1; \widehat{b}_1 - 1;  \widehat{x}) + C. 
\end{align}
It completes the proof of \eqref{5.10}. Similarly, the other integrals can be derived. 
\end{proof}
Next, we examine the analytic properties of the dual-valued Gauss hypergeometric function. We present its dual differential equation, differential formulas, contiguous relations, integral representations, and transformation formulas. Since the proofs follow a similar approach to those for the generalized and confluent dual hypergeometric functions, we omit them.

 The dual Gauss function converges absolutely for $\vert \widehat{x}\vert < 1$ and for $\vert \widehat{x}\vert = 1$, whenever $\Re (\widehat{b}_1) > \Re(\widehat{a}_1) + \Re(\widehat{a}_2)$. For dual differential operator $\widehat{\theta} = \widehat{x} \, \frac{d}{d \widehat{x}}$, the dual Gauss hypergeometric function satisfies the dual differential equation
\begin{align}
[\widehat{\theta} (\widehat{\theta} + \widehat{b}_1 - 1)   - \widehat{x} (\widehat{\theta} + \widehat{a}_1) \, (\widehat{\theta} + \widehat{a}_2) ] {}_2F_1 (\widehat{a}_1, \widehat{a}_2; \widehat{b}_1;  \widehat{x}) = 0,
\end{align}
or
\begin{align}
	\widehat{x} (1 - \widehat{x}) \, z'' + [\widehat{b}_1 - (1 + \widehat{a}_1 + \widehat{a}_2) \widehat{x}] \, z' - \widehat{a}_1 \, \widehat{a}_2 \, z = 0,
\end{align}
where $z = {}_2F_1 (\widehat{a}_1, \widehat{a}_2; \widehat{b}_1;  \widehat{x})$. 

The dual Gauss function obeys the following differential formulas
\begin{align*}
\left(\frac{d}{d \widehat{x}}\right)^r {}_2F_1 (\widehat{a}_1, \widehat{a}_2; \widehat{b}_1;  \widehat{x}) & = \frac{(\widehat{a}_1)_r \, (\widehat{a}_2)_r }{(\widehat{b}_1)_r } \, {}_2F_1 (\widehat{a}_1 + r, \widehat{a}_2 + r; \widehat{b}_1 + r;  \widehat{x});
\\[5pt]
\left(\frac{d}{d \widehat{x}}\right)^r [(\widehat{x})^{\widehat{a}_1 + r - 1} \, {}_2F_1 (\widehat{a}_1, \widehat{a}_2; \widehat{b}_1;  \widehat{x})] & = (\widehat{a}_1)_r \, (\widehat{x})^{\widehat{a}_1 - 1} \, {}_2F_1 (\widehat{a}_1 + r, \widehat{a}_2; \widehat{b}_1;  \widehat{x});
\\[5pt]
\left(\frac{d}{d \widehat{x}}\right)^r [(\widehat{x})^{\widehat{a}_2 + r - 1} \, {}_2F_1 (\widehat{a}_1, \widehat{a}_2; \widehat{b}_1;  \widehat{x})] & = (\widehat{a}_2)_r \, (\widehat{x})^{\widehat{a}_2 - 1} \, {}_2F_1 (\widehat{a}_1, \widehat{a}_2 + r; \widehat{b}_1;  \widehat{x});
\\[5pt]
\left(\frac{d}{d \widehat{x}}\right)^r [(\widehat{x})^{\widehat{b}_1  - 1} \, {}_2F_1 (\widehat{a}_1, \widehat{a}_2; \widehat{b}_1;  \widehat{x})] & = (\widehat{b}_1 - r)_r \, (\widehat{x})^{\widehat{b}_1 - r - 1} \, {}_2F_1 (\widehat{a}_1, \widehat{a}_2; \widehat{b}_1 - r;  \widehat{x}).
\end{align*}
Further, the dual valued Gauss hypergeometric function satisfies the set of contiguous formulas as given below
\begin{align*}
	(\widehat{a}_1 - \widehat{a}_2) \, F & = \widehat{a}_1 \, F (\widehat{a}_1 +) - \widehat{a}_2 \, F(\widehat{a}_2 +);
	\\[5pt]
	(\widehat{a}_1 - \widehat{b}_1 + 1) \, F & = \widehat{a}_1 \, F (\widehat{a}_1 +) - (\widehat{b}_1 - 1) \, F(\widehat{b}_1 -);
	\\[5pt]
	[ \widehat{a}_1 + (\widehat{a}_2 - \widehat{b}_1) \widehat{x}] \, F & = \widehat{a}_1 (1 - \widehat{x}) \, F(\widehat{a}_1 +) - \widehat{x} \, \frac{(\widehat{b}_1 - \widehat{a}_1) \, (\widehat{b}_1 - \widehat{a}_2)}{ \widehat{b}_1} F (\widehat{b}_1 +);
	\\[5pt]
	(1 - \widehat{x}) F & = F  (\widehat{a}_1 -)  - \frac{\widehat{b}_1 - \widehat{a}_2}{\widehat{b}_1} \, \widehat{x} F(\widehat{b}_1 +);
	\\[5pt]
	(1 - \widehat{x}) F & = F  (\widehat{a}_2 -)  - \frac{\widehat{b}_1 - \widehat{a}_1}{\widehat{b}_1} \, \widehat{x} F(\widehat{b}_1 +),
\end{align*}
where $F = {}_2F_1 (\widehat{a}_1, \widehat{a}_2; \widehat{b}_1;  \widehat{x})$, $F(\widehat{a}_1 \pm) = {}_2F_1 (\widehat{a}_1 \pm 1, \widehat{a}_2; \widehat{b}_1;  \widehat{x})$, $F(\widehat{b}_1 \pm) = {}_2F_1 (\widehat{a}_1, \widehat{a}_2; \widehat{b}_1 \pm 1;  \widehat{x})$ and so on. 

Following integral representation of dual Gauss hypergeometric function can be obtained using the integral definition of dual beta function.
\begin{align}
{}_2F_1 (\widehat{a}_1, \widehat{a}_2; \widehat{b}_1;  \widehat{x}) = \frac{\Gamma_d(\widehat{b}_1)}{\Gamma_d(\widehat{a}_1) \, \Gamma_d(\widehat{b}_1 - \widehat{a}_1)} \,  \int_{0}^{1} u^{\widehat{a}_1 - 1} \, (1 - u)^{\widehat{b}_1 - \widehat{a}_1 - 1} \, (1 - u \, \widehat{x})^{-\widehat{a}_2} \, du.
\end{align}
In case $\widehat{x}  = 1$, we have the following formula
\begin{align}
	{}_2F_1 (\widehat{a}_1, \widehat{a}_2; \widehat{b}_1;  1) & = \frac{\Gamma_d(\widehat{b}_1)}{\Gamma_d(\widehat{a}_1) \, \Gamma_d(\widehat{b}_1 - \widehat{a}_1)} \,  \int_{0}^{1} u^{\widehat{a}_1 - 1} \, (1 - u)^{\widehat{b}_1 - \widehat{a}_1 -\widehat{a}_2 - 1}  \, du\nonumber\\
	& = \frac{\Gamma_d(\widehat{b}_1)}{\Gamma_d(\widehat{a}_1) \, \Gamma_d(\widehat{b}_1 - \widehat{a}_1)} \,  \frac{\Gamma_d(\widehat{a}_1) \, \Gamma_d (\widehat{b}_1 - \widehat{a}_1 -\widehat{a}_2)}{ \Gamma_d(\widehat{b}_1 - \widehat{a}_2)}\nonumber\\
	& =  \frac{\Gamma_d(\widehat{b}_1) \, \Gamma_d (\widehat{b}_1 - \widehat{a}_1 -\widehat{a}_2)}{\Gamma_d(\widehat{b}_1 - \widehat{a}_1) \, \Gamma_d(\widehat{b}_1 - \widehat{a}_2)}.
\end{align}
This formula can be considered as the dual version of summation theorem. 
\begin{theorem}
	If $\vert \widehat{x} \vert < 1$ and $\left\vert\frac{\widehat{x}}{1 - \widehat{x}}\right \vert < 1$, then following transformation formulas are satisfied by the dual Gauss hypergeometric function
	\begin{align}
		{}_2F_1 (\widehat{a}_1, \widehat{a}_2; \widehat{b}_1; \widehat{x}) = (1 - \widehat{x})^{-\widehat{a}_1} {}_2F_1 \left(\widehat{a}_1, \widehat{b}_1 - \widehat{a}_2; \widehat{b}_1; \frac{-\widehat{x}}{1 - \widehat{x}}\right).\label{5.31}
	\end{align}
\begin{align}
	{}_2F_1 (\widehat{a}_1, \widehat{a}_2; \widehat{b}_1; \widehat{x}) = (1 - \widehat{x})^{\widehat{b}_1 - \widehat{a}_1 - \widehat{a}_2} \, {}_2F_1 (\widehat{b}_1 - \widehat{a}_1, \widehat{b}_1 - \widehat{a}_2; \widehat{b}_1; \widehat{x}).\label{2.5.4}
\end{align}
\end{theorem}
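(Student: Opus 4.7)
The plan is to prove \eqref{5.31} by a change of variable in the integral representation of the dual Gauss function established just above the theorem, and then to deduce \eqref{2.5.4} by applying \eqref{5.31} twice.

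For \eqref{5.31}, I start from the integral representation of ${}_2F_1$, invoked with the two upper parameters interchanged (which is legitimate because the series \eqref{5.3} is visibly symmetric in $\widehat{a}_1$ and $\widehat{a}_2$):
\[
{}_2F_1(\widehat{a}_1,\widehat{a}_2;\widehat{b}_1;\widehat{x}) = \frac{\Gamma_d(\widehat{b}_1)}{\Gamma_d(\widehat{a}_2)\Gamma_d(\widehat{b}_1-\widehat{a}_2)}\int_0^1 u^{\widehat{a}_2-1}(1-u)^{\widehat{b}_1-\widehat{a}_2-1}(1-u\widehat{x})^{-\widehat{a}_1}\,du.
\]
The real substitution $u=1-v$ transforms the kernel $(1-u\widehat{x})^{-\widehat{a}_1}$ into $(1-\widehat{x}+v\widehat{x})^{-\widehat{a}_1}$. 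The key algebraic step is the factorisation
\[
1-\widehat{x}+v\widehat{x} = (1-\widehat{x})\Bigl(1 - v\cdot\tfrac{-\widehat{x}}{1-\widehat{x}}\Bigr),
\]
together with the multiplicative rule for dual powers, which allows me to pull $(1-\widehat{x})^{-\widehat{a}_1}$ out of the integral. The leftover integral is precisely the integral representation of ${}_2F_1(\widehat{b}_1-\widehat{a}_2,\widehat{a}_1;\widehat{b}_1;-\widehat{x}/(1-\widehat{x}))$, and one more use of upper-parameter symmetry delivers \eqref{5.31}.

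For \eqref{2.5.4}, I apply \eqref{5.31} a second time, to its own right-hand side. I first swap the upper parameters of the ${}_2F_1$ appearing on the right of \eqref{5.31} (series symmetry), and then use \eqref{5.31} with the replacements $\widehat{a}_1\to\widehat{b}_1-\widehat{a}_2$, $\widehat{a}_2\to\widehat{a}_1$, $\widehat{x}\to y$, where $y:=-\widehat{x}/(1-\widehat{x})$. The elementary dual identities $1-y=1/(1-\widehat{x})$ and $-y/(1-y)=\widehat{x}$ give $(1-y)^{-(\widehat{b}_1-\widehat{a}_2)}=(1-\widehat{x})^{\widehat{b}_1-\widehat{a}_2}$ and turn the new argument back into $\widehat{x}$. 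Combining this with the $(1-\widehat{x})^{-\widehat{a}_1}$ prefactor of \eqref{5.31} produces $(1-\widehat{x})^{\widehat{b}_1-\widehat{a}_1-\widehat{a}_2}$, which is exactly \eqref{2.5.4}.

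The main obstacle is checking that the exponent manipulations above remain valid when the base or the exponent is dual, in particular the factorisation $(\widehat{A}\widehat{B})^{\widehat{c}}=\widehat{A}^{\widehat{c}}\widehat{B}^{\widehat{c}}$ and the inversion $(1/(1-\widehat{x}))^{\widehat{c}}=(1-\widehat{x})^{-\widehat{c}}$. Both reduce, by a first-order expansion in $\varepsilon$ using formula \eqref{2.7} and $\varepsilon^2=0$, to the corresponding real identities, so they hold; this is the only step where the dual-number character of the argument genuinely intervenes and therefore deserves explicit verification before declaring that the classical proof of the Pfaff and Euler transformations transfers verbatim. The hypotheses $\vert\widehat{x}\vert<1$ and $\vert\widehat{x}/(1-\widehat{x})\vert<1$ of the theorem simultaneously guarantee absolute convergence of both hypergeometric series and regularity of the real integrand on $(0,1)$, so Fubini-type interchange of summation and integration in the recognition step is unproblematic.
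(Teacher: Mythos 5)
Your proof of \eqref{2.5.4} is essentially the paper's: both apply the Pfaff transformation \eqref{5.31} a second time, using the involution $\widehat{y}=-\widehat{x}/(1-\widehat{x})$, $1-\widehat{y}=1/(1-\widehat{x})$, $-\widehat{y}/(1-\widehat{y})=\widehat{x}$, and combine the two power prefactors into $(1-\widehat{x})^{\widehat{b}_1-\widehat{a}_1-\widehat{a}_2}$. For \eqref{5.31}, however, you take a genuinely different route. The paper works entirely at the level of series: it expands $(1-\widehat{x})^{-\widehat{a}_1}\,{}_2F_1(\widehat{a}_1,\widehat{b}_1-\widehat{a}_2;\widehat{b}_1;-\widehat{x}/(1-\widehat{x}))$ as a double sum, uses $(\widehat{a}_1)_m(\widehat{a}_1+m)_k=(\widehat{a}_1)_{m+k}$ to rearrange it into $\sum_k {}_2F_1(-k,\widehat{b}_1-\widehat{a}_2;\widehat{b}_1;1)\,(\widehat{a}_1)_k(\widehat{x})^k/k!$, and then evaluates the inner terminating sum by the dual Gauss summation formula. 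You instead start from the Euler integral representation (with the upper parameters interchanged), substitute $u=1-v$, factor $1-(1-v)\widehat{x}=(1-\widehat{x})\bigl(1-v\cdot\frac{-\widehat{x}}{1-\widehat{x}}\bigr)$, and recognize the resulting integral. Both arguments are sound, and you are right to single out the multiplicativity $(\widehat{A}\widehat{B})^{\widehat{c}}=\widehat{A}^{\widehat{c}}\widehat{B}^{\widehat{c}}$ as the one genuinely dual-valued step; it follows from $\log(\widehat{A}\widehat{B})=\log\widehat{A}+\log\widehat{B}$ at first order in $\varepsilon$. The trade-off is that your route visibly requires $\Re(\widehat{a}_2)>0$ and $\Re(\widehat{b}_1-\widehat{a}_2)>0$ for the integral representation to apply, a restriction absent from the theorem's hypotheses; the paper's series proof appears to avoid this, but only superficially, since its dual Gauss summation at argument $1$ was itself derived from the same integral representation, so both arguments carry the same implicit parameter constraints. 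Your approach buys a shorter, more transparent computation; the paper's buys a purely series-theoretic derivation that sidesteps the change-of-variables bookkeeping.
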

\begin{proof}
	Using the series representations of dual binomial theorem given in \eqref{5.1} and dual Gauss hypergeometric function, we have
\begin{align}
&	(1 - \widehat{x})^{-\widehat{a}_1} {}_2F_1 \left(\widehat{a}_1, \widehat{b}_1 - \widehat{a}_2; \widehat{b}_1; \frac{-\widehat{x}}{1 - \widehat{x}}\right)\nonumber\\
 & = \sum_{m = 0}^{\infty} \frac{(-1)^m \, (\widehat{a}_1)_m \, (\widehat{b}_1 - \widehat{a}_2)_m \, (\widehat{x})^m}{(\widehat{b}_1)_m \, (1 - \widehat{x})^{m + \widehat{a}_1} \, m !}\nonumber\\
	& = \sum_{m, k = 0}^{\infty} \frac{(-1)^m \, (\widehat{a}_1)_m \, (\widehat{b}_1 - \widehat{a}_2)_m \, (\widehat{a}_1 + m)_k \, (\widehat{x})^{m + k} }{(\widehat{b}_1)_m  \, m ! \, k !}.
\end{align}
The identity for dual shifted factorial, $(\widehat{a}_1)_m \, (\widehat{a}_1 + m)_k = (\widehat{a}_1)_{m + k}$, yields
\begin{align}
&	(1 - \widehat{x})^{-\widehat{a}_1} {}_2F_1 \left(\widehat{a}_1, \widehat{b}_1 - \widehat{a}_2; \widehat{b}_1; \frac{-\widehat{x}}{1 - \widehat{x}}\right)\nonumber\\
 & =  \sum_{m, k = 0}^{\infty} \frac{ (\widehat{a}_1)_{m + k} \, (\widehat{b}_1 - \widehat{a}_2)_m \, (\widehat{a}_1 + m)_k \, (-1)^m \, (\widehat{x})^{m + k} }{(\widehat{b}_1)_m  \, m ! \, k !}\nonumber\\
	& = \sum_{k = 0}^{\infty} \sum_{m = 0}^{k} \frac{(-1)^m \, (\widehat{a}_1)_k \, (\widehat{b}_1 - \widehat{a}_2)_m \, (\widehat{x})^k}{(\widehat{b}_1)_m \, m ! \, (k - m)!}\nonumber\\
	& = \sum_{k = 0}^{\infty} \sum_{m = 0}^{k} \frac{(-k)_m \, (\widehat{a}_1)_k \, (\widehat{b}_1 - \widehat{a}_2)_m \, (\widehat{x})^k}{(\widehat{b}_1)_m \, m ! \, k!}\nonumber\\
	& = \sum_{k = 0}^{\infty} \sum_{m = 0}^{k} \frac{(-k)_m \,  (\widehat{b}_1 - \widehat{a}_2)_m}{(\widehat{b}_1)_m \, m ! } \, \frac{(\widehat{a}_1)_k \, (\widehat{x})^k}{k!}\nonumber\\
	& = \sum_{k = 0}^{\infty} {}_2F_1 (-k, \widehat{b}_1 - \widehat{a}_2; \widehat{b}_1; 1) \, \frac{(\widehat{a}_1)_k \, (\widehat{x})^k}{k!}\nonumber\\
	& = \sum_{k = 0}^{\infty} \frac{\Gamma_d (\widehat{b}_1) \, \Gamma_d (\widehat{a}_2 + k)}{\Gamma_d (\widehat{b}_1 + k) \, \Gamma_d (\widehat{a}_2)} \, \frac{(\widehat{a}_1)_k \, (\widehat{x})^k}{k!}\nonumber\\
	& = \sum_{k = 0}^{\infty} \frac{(\widehat{a}_1)_k \, (\widehat{a}_2)_k}{(\widehat{b}_1)_k} \, \frac{z^k}{k !} = {}_2F_1 (\widehat{a}_1, \widehat{a}_2; \widehat{b}_1; \widehat{x}).
\end{align} 
It completes the proof of \eqref{5.31}. Now, using the symmetry of the dual Gauss function, we rewrite the above transformation formula as
\begin{align}
	{}_2F_1 (\widehat{a}_1, \widehat{a}_2; \widehat{b}_1; \widehat{x}) & = (1 - \widehat{x})^{-\widehat{a}_1} {}_2F_1 \left(\widehat{a}_1, \widehat{b}_1 - \widehat{a}_2; \widehat{b}_1; \frac{-\widehat{x}}{1 - \widehat{x}}\right)\nonumber\\
	& = (1 - \widehat{x})^{-\widehat{a}_1} {}_2F_1 \left(\widehat{b}_1 - \widehat{a}_2, \widehat{a}_1; \widehat{b}_1; \frac{-\widehat{x}}{1 - \widehat{x}}\right).
\end{align}
Put $\widehat{x} = \frac{-\widehat{y}}{1 - \widehat{y}}$ and $\frac{1}{1 - \widehat{x}} = 1 - \widehat{y}$, we have 
\begin{align}
	&(1 - \widehat{x})^{\widehat{a}_1 - \widehat{a}_2}  \, {}_2F_1 (\widehat{b}_1 - \widehat{a}_1, \widehat{b}_1 - \widehat{a}_2; \widehat{b}_1; \widehat{x})\nonumber\\
	& = (1 - \widehat{y})^{\widehat{a}_2 - \widehat{b}_1} {}_2F_1 \left(\widehat{b}_1 - \widehat{a}_1, \widehat{b}_1 - \widehat{a}_2; \widehat{b}_1; \frac{-\widehat{y}}{1-\widehat{y}}\right)\nonumber\\
	& = {}_2F_1 (\widehat{b}_1 - \widehat{a}_2, \widehat{a}_1; \widehat{b}_1; \widehat{y}) = {}_2F_1 \left(\widehat{b}_1 - \widehat{a}_2, \widehat{a}_1; \widehat{b}_1; \frac{-\widehat{x}}{1 - \widehat{x}}\right).
\end{align}
Thus
\begin{align}
	&(1 - \widehat{x})^{\widehat{b}_1 - \widehat{a}_1 - \widehat{a}_2} \, {}_2F_1 (\widehat{b}_1 - \widehat{a}_1, \widehat{b}_1 - \widehat{a}_2; \widehat{b}_1; \widehat{x})\nonumber\\
	 &= (1 - \widehat{x})^{-\widehat{a}_1} (1 - \widehat{x})^{\widehat{b}_1 - \widehat{a}_2} {}_2F_1 (\widehat{b}_1 - \widehat{a}_2, \widehat{b}_1 - \widehat{a}_1; \widehat{b}_1; \widehat{x})\nonumber\\
	& = (1 - \widehat{x})^{-\widehat{a}_1} {}_2F_1 \left(\widehat{b}_1 - \widehat{a}_2, \widehat{a}_1; \widehat{b}_1; \frac{-\widehat{x}}{1 - \widehat{x}}\right) \nonumber\\
	& = {}_2F_1 (\widehat{a}_1, \widehat{a}_2; \widehat{b}_1; \widehat{x}).
\end{align}
It completes the proof.
\end{proof}
\section{Conclusion}
Dual numbers were introduced by Clifford in the 19th century, they extend real numbers by introducing an infinitesimal unit $\varepsilon$, where $\varepsilon^2 = 0$. This unique structure allows dual numbers to capture small perturbations in mathematical models, making them particularly useful in computational methods and theoretical physics. Over the years, extensions and variations of dual numbers including hyper-dual numbers, dual-complex numbers,  split-complex numbers and dual matrices, can be seen in \cite{cwk,pvd,vmt}, found applications in distinct mathematical and scientific domains. One can visit these papers and references therein \cite{ar,ja,wb,cs15,cs17,dhhk,drh,fa,isf,gw,msc,ovr,pvfa,pv,pvd,eac,rb,reaw,vt,yi}.

This study of dual hypergeometric functions opens up new avenues for exploring special functions, offering a deeper understanding of their properties and applications. By extending this framework to other hypercomplex number systems, such as quaternions, dual quaternions, and split complex numbers, we can further enrich the theory of hypergeometric functions. Given that both dual numbers and hypergeometric functions have significant applications in mathematics and physics, there is a strong potential for discovering new applications in mathematical physics, particularly in areas involving complex systems, quantum mechanics, and theoretical physics.


\begin{thebibliography}{99}
\bibitem{abadi2016tensorflow} M.\,Abadi et al., “TensorFlow: A system for large-scale machine learning,” in \textit{Proceedings of the 12th USENIX Symposium on Operating Systems Design and Implementation (OSDI)}, 2016.

\bibitem{ar} {Y.\,M.\,Agamawi, A.\.V.\,Rao}, \textit{CGPOPS: A C++ Software for Solving Multiple-Phase Optimal Control Problems Using Adaptive Gaussian Quadrature Collocation and Sparse Nonlinear Programming}, Association for Computing Machinery, New York, 46 (2020), no. 3, 1--38.

\bibitem{ja} J.\,Angeles, \textit{The Application of Dual Algebra to Kinematic Analysis. In: Angeles, J., Zakhariev, E. (eds) Computational Methods in Mechanical Systems}, NATO ASI Series,  161 (1998), Springer, Berlin, Heidelberg.

\bibitem{andrews1974scale} D.\,F.\,Andrews and C.\,L.\,Mallows, “Scale mixtures of normal distributions,” \textit{Journal of the Royal Statistical Society}, vol.\,36, no.\,1, pp.\,99--102, 1974.

\bibitem{AAR} G.\,E.\,Andrews, R.\,Askey, R.\,Roy, \textit{Special functions, Encyclopedia of Mathematics and its Applications}, 71. Cambridge University Press, Cambridge, 1999.



\bibitem{bender1978advanced} C.\,M.\,Bender and S.\,A.\,Orszag, \textit{Advanced Mathematical Methods for Scientists and Engineers}, McGraw-Hill, 1978.

\bibitem{wb} W.\,Bertram,  \textit{Differential Geometry, Lie Groups and Symmetric Spaces over General Base Fields and Rings}, American Mathematical Society, 2008.

\bibitem{cwk} W.\,K.\,Clifford, \textit{Preliminary Sketch of Biquaternions}, Proc. Lond. Math. Soc. 4 (1871/73), 381--395.

\bibitem{cs15} A.\,Cohen, M.\,Shoham,  \textit{Application of Hyper-Dual Numbers to Multibody Kinematics}, J. Mech. Robotics 8 (2015), 011015. 

\bibitem{cs17} A.\,Cohen, M.\,Shoham, \textit{Application of Hyper-Dual Numbers to Rigid Bodies Equations of Motion}, Mechanism Machine Theor. 111 (2017), 76--84. 


\bibitem{davenport1993computer} J.\,H.\,Davenport et al., \textit{Computer Algebra: Systems and Algorithms for Algebraic Computation}, Academic Press, 1993.

\bibitem{devroye1986non} L.\,Devroye, \textit{Non-Uniform Random Variate Generation}, Springer, 1986.


\bibitem{dhhk} F.\,Diewald, M.\,Heier, M.\,Horsch, C.\,Kuhn,  K.\,Langenbach, H.\,Hasse,  et al, \textit{Three-dimensional Phase Field Modeling of Inhomogeneous Gas-Liquid Systems Using the Pets Equation of State}, J. Chem. Phys. 149 (2018), 064701.

\bibitem{drh} T.\,Diewald, R.\,Rehner, H.\,Hasse,  \textit{Application of Generalized (Hyper-) Dual Numbers in Equation of State Calculations}, Frontiers in Chemical Engineering, 3 (2018), 758090. 

\bibitem{d24} R.\,Dwivedi, \textit{Gauss hypergeometric function: Special matrix functions, $q$-special functions}, Berlin, Boston: De Gruyter, 2025. \url{https://doi.org/10.1515/9783111324586}.


\bibitem{emo} A.\,Erd\'elyi, W.\,Magnus, F.\,Oberhettinger, F.\,G.\,Tricomi, \textit{Higher Transcendental Functions}, Vol. I, McGraw-Hill, New York, London, 1953.


\bibitem{fa} J.\,Fike, J.\,Alonso,  \textit{“The Development of Hyper-Dual Numbers for Exact Second-Derivative Calculations,” in 49th AIAA Aerospace Sciences Meeting including the New Horizons Forum and Aerospace Exposition}, Orlando, Fl, USA, January 7, 2011, (Reston, VA: AIAA), 886 (2011). 

\bibitem{isf} I.\,S.\,Fischer, \textit{Dual-number methods in Kinematics}, Statics and Dynamics, Routledge, 2017.


\bibitem{fox1955moment} C.\,Fox, “The moment problem,” \textit{Annals of Mathematics}, vol.\,62, no.\,3, pp.\,464--476, 1955.


\bibitem{giles2008extended} M.\,Giles, “An extended collection of matrix derivative results for forward and reverse mode automatic differentiation,” Technical Report, Oxford University Computing Lab, 2008.

\bibitem{gw} A.\,Griewank,  A.\,Walther,   \textit{Evaluating Derivatives: Principles and Techniques of Algorithmic Differentiation}. Chennai, TN: SIAM, (2008).

\bibitem{hormander1983analysis} L.\,H\"ormander, \textit{The Analysis of Linear Partial Differential Operators I}, Springer, 1983.


\bibitem{maclane1998categories} S.\,MacLane, \textit{Categories for the Working Mathematician}, Springer, 1998.

\bibitem{msc} S.\,Montgomery-Smith, C.\,Shy,  \textit{Using Lie Derivatives with Dual Quaternions for Parallel Robots}, Machines, 11 (2023), no. 12, 1056.

\bibitem{olver1993applications} P.\,J.\,Olver, \textit{Applications of Lie Groups to Differential Equations}, Springer, 1993.

\bibitem{ovr} L.\,A.\,Orbegoso Moreno,   E.\,D.\,Valverde Ram\'irez, \textit{Automatic differential kinematics of serial manipulator robots through dual numbers}, Tecnología en Marcha, 37  (2024), no. 4, 18--30.

\bibitem{pvfa} E.\,Pennestr{\'i}, P.\,P.\,Valentini, G.\,Figliolini, J.\,Angeles,  \textit{Dual Cayley-Klein parameters and mobius transform: theory and applications}, Mech. Mach. Theory, 106 (2016), 50--67.

\bibitem{pv} E.\,Pennestr{\'i}, P.\,P.\,Valentini,  \textit{Linear dual algebra algorithms and their application to kinematics}, Multibody dynamics, 207--229, Comput. Methods Appl. Sci., 12, Springer, Berlin, 2009.

\bibitem{pvd} E.\,Pennestr{\'i}, P.\,P.\,Valentini, D.\, De Falco, \textit{The Moore-Penrose dual generalized inverse matrix with application to kinematic synthesis of spatial linkages}, J. Mech. Des., 140 (2018), 1--7. 

\bibitem{eac} R.\,Pe\'on-Escalante, K.\,B.\,Cant\'un-Avila, O.\,Carvente, A.\,Espinosa-Romero, F.\,Pe\~nu\~nuri, \textit{A dual number formulation to efficiently compute higher order directional derivatives}, Journal of Computational Science, 76 (2024), 101227.

\bibitem{edr45} E.\,D.\,Rainville, \textit{The contiguous function relations for $_p F_q$ with application to Batemean's $J_n ^{u,\nu }$ and Rice's $H_n \left( {\zeta ,p,\nu } \right)$}, Bull. Amer. Math. Soc., 51 (1945), no. 10, 714--723.

\bibitem{edr} E.\,D.\,Rainville, \emph{Special Functions}, Chelsea, New York, 1960.

\bibitem{rb} P.\,Rehner, G.\,Bauer,   \textit{Application of Generalized (Hyper-) Dual Numbers in Equation of State Modeling}, Front. Chem. Eng., 3 (2021), 758090.

\bibitem{reaw} P.\,Rehner, A.\,Aasen, $\o$, Wilhelmsen,  \textit{Tolman Lengths and Rigidity Constants from Free-Energy Functionals-General Expressions and Comparison of Theories}, J. Chem. Phys. 151 (2019), 244710. 

%
%

\bibitem{stitzinger1982dual} E.\,L.\,Stitzinger, “Dual numbers and automatic differentiation,” \textit{American Mathematical Monthly}, vol.\,89, no.\,3, pp.\,196--200, 1982.

\bibitem{vt} A.\,Valverde, P.\,Tsiotras,  \textit{ Dual Quaternion Framework for Modeling of Spacecraft-Mounted Multibody Robotic Systems}, Frontiers in Robotics and AI, 5 (2018), 377141.

\bibitem{vmt} T.\,Verma, D.\,Mishra, M.\,Tsatsomeros, \textit{Cayley transform for Toeplitz and dual matrices}, Linear Algebra Appl. 703 (2024), 627--644.

\bibitem{yi} I.\,M.\,Yaglom,  \textit{A Simple Non-Euclidean Geometry and its Physical Basis}, Springer, 1979.
		
\end{thebibliography}
\end{document}